\newtheorem{theorem}{Theorem}[section]
\newtheorem{corollary}[theorem]{Corollary}
\newtheorem{lemma}[theorem]{Lemma}
\newcommand{\ds}{\displaystyle}
\def\cS{\mathcal{S}}
\def\cA{\mathcal{A}}
\def\cB{\mathcal{B}}
\def\cQ{\mathcal{Q}}
\def\cR{\mathcal{R}}
\def\cM{\mathcal{M}}
\def\cN{\mathcal{N}}
\def\cN{\mathcal{N}}
\def\cD{\mathcal{D}}
\def\cP{\mathcal{P}}
\def\cL{\mathcal{L}}
\def\cH{\mathcal{H}}
\def\cF{\mathcal{F}}
\title{Enumeration of labelled 4-regular planar graphs}
\author{
	Marc Noy 
    \thanks{Universitat Polit\`ecnica de Catalunya and Barcelona Graduate School of Mathematics, Department of Mathematics, Edifici Omega, 08034 Barcelona, Spain. 
    E-mail: {\tt marc.noy@upc.edu}. 
    Supported by the Spanish Ministerio de Econom\'{i}a y Competitividad projects MTM2014-54745-P, MDM-2014-0445 and MTM2017-82166-P.}
\and
	Cl\'ement Requil\'e	
	\thanks{
		Technische Universit\"at Wien, Institute of Discrete Mathematics and Geometry, Wiedner Hauptstrasse 8, 1040 Wien, Austria. 
		E-mail: {\tt clement.requile@tuwien.ac.at}. 
		Supported by the Special Research Program F50 \textit{Algorithmic and Enumerative Combinatorics} of the Austrian Science Fund.}
\and
    Juanjo Ru\'e 
    \thanks{Universitat Polit\`ecnica de Catalunya and Barcelona Graduate School of Mathematics, Department of Mathematics, Edifici Omega, 08034 Barcelona, Spain. 
    E-mail: {\tt juan.jose.rue@upc.edu}. 
    Supported by the Spanish Ministerio de Econom\'{i}a y Competitividad projects MTM2014-54745-P, MDM-2014-0445 and MTM2017-82166-P.}
}
\begin{document}

\maketitle

\begin{abstract}
We present the first combinatorial scheme for counting labelled 4-regular planar graphs through a complete recursive decomposition. More precisely, we show that the exponential generating function of labelled 4-regular planar graphs can be computed effectively as the solution of a system of equations, from which the  coefficients can be extracted.
As a byproduct, we also enumerate labelled 3-connected 4-regular planar graphs, and simple 4-regular rooted maps.
\end{abstract}

\section{Introduction}

The enumeration of labelled planar graphs  has been recently  the subject of much research; see \cite{ICM, handbook} for  surveys on the area.
The problem of counting planar graphs was first solved by Gim\'enez and Noy \cite{gn}, while cubic planar graphs where enumerated in Bodirsky, Kang, L\"offler and McDiarmid \cite{bklm2007} (see also \cite{cubic-revisited} for an update). On the other hand, the enumeration of simpler classes of planar graphs, such as series-parallel graphs and, more generally,  subcritical classes of graphs is easier and well understood~\cite{subcritical}.

One of the open problems in this area is the enumeration of labelled \emph{4-regular} planar graphs.
There are several references on the exhaustive \emph{generation} of 4-regular planar graphs.  Starting with a collection of basic graphs one shows how to generate all graphs in a certain class starting from the basic pieces and applying a sequence of local modifications.
This was first done for the class of 4-regular planar graphs by Lehel \cite{Lehel}, using as basis the graph of the octahedron. For 3-connected 4-regular planar graphs a similar generation scheme was shown by Boersma, Duijvestijn and  G\"obel
 \cite{BDG93}; by removing isomorphic duplicates they were able to compute the numbers of 3-connected 4-regular planar graphs up to 15 vertices. It  is also the approach of the more recent work by Brinkmann, Greenberg, Greenhill, McKay, Thomas and Wollan \cite{BGGMTW05} for generating planar quadrangulations of several types. The authors of \cite{BGGMTW05} use several enumerative formulas to check the correctness of their generation procedure. However this does not include the class of
3-connected quadrangulations, which by duality correspond to 3-connected 4-regular planar graphs, a class for which no enumeration scheme was known until now.

In the present  paper we provide the first scheme for counting 4-regular planar graphs through a complete recursive decomposition. In what follows all graphs are labelled.
Let $C(x)= \sum c_n \frac{x^n}{n!}$ be the exponential generating function of labelled connected 4-regular planar graphs counted according to the number of vertices. We show that the derivative $C'(x)$ can be computed effectively as the solution of a (rather involved) system of  algebraic equations.
In particular $C'(x)$ is an algebraic function.
Using a computer algebra system we can extract the coefficients of $C'(x)$, hence also of $C(x)$.  If $G(x) = \sum g_n \frac{x^n}{n!}$ is now the generating function of all  4-regular planar graphs, the exponential formula $G(x) = e^{C(x)}$ allows us to find the coefficients $g_n$ as well.

Our main result is the following:
\begin{theorem}\label{th:main}
The generating function $C'(x)$ is algebraic and is expressible as the  solution of a system of algebraic equations from which one can effectively compute their coefficients.
\end{theorem}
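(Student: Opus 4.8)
The plan is to follow the now-standard strategy of enumerating a planar family by increasing connectivity, in the spirit of Tutte's decomposition and its use by Gim\'enez and Noy for planar graphs and by Bodirsky et al.\ for the cubic case. Since differentiating an exponential generating function amounts to distinguishing a vertex, I would work throughout with $C'(x)$, the generating function of connected $4$-regular planar graphs carrying a pointed vertex, and relate it successively to the $2$-connected and then the $3$-connected levels. The first reduction, from connected to $2$-connected, proceeds through the block (cut-vertex) structure; the only delicate point compared with the general planar case is the degree constraint, since a cut vertex of a $4$-regular graph distributes its four incident edges among the blocks meeting at it, so the decomposition must record how the degree four is split at the pointed vertex and at cut vertices. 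This yields an equation expressing $C'$ in terms of the generating function of $2$-connected $4$-regular planar graphs.

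Next I would treat the $2$-connected level by the theory of networks. A network is a graph with two poles in which every non-pole vertex has degree four and each pole carries a prescribed number of ``missing'' half-edges, so that the poles may later be joined to the rest of the graph to restore degree four; removing a root edge from a $2$-connected graph leaves such a network with poles of reduced degree. Networks decompose as series compositions, parallel compositions, and substitutions of networks into the edges of a $3$-connected core, following the $3$-block (SPQR) decomposition. Imposing that every internal vertex ends with degree exactly four forces one to track the pole degrees, together with the multiplicities of parallel edges between poles (hence simplicity), as catalytic parameters; carrying these through series and parallel composition yields a polynomial system relating the network generating function to that of the $3$-connected cores.

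Finally, the $3$-connected cores are counted via maps. By Whitney's theorem a $3$-connected planar graph has an essentially unique embedding, so the number of such graphs is recovered from the number of rooted maps by dividing out the root choices. Using duality, $4$-regular planar maps correspond to planar quadrangulations, and $3$-connected $4$-regular planar graphs correspond to $3$-connected quadrangulations; these rooted maps I would enumerate by Tutte's quadratic method, introducing a catalytic variable marking the root-face degree, solving the resulting functional equation to obtain an algebraic generating function, and encoding simplicity along the way. Transferring back from rooted maps to labelled graphs then completes the description of the $3$-connected level.

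Assembling the three levels produces a single system of algebraic equations; eliminating the auxiliary network and map series shows that $C'$ is algebraic, and since every equation is explicit its coefficients can be extracted. I expect the main obstacle to be twofold: solving the catalytic functional equation at the map level by the quadratic method, including the correct encoding of simplicity and of $4$-regularity there, and keeping the network bookkeeping of pole degrees and edge multiplicities consistent so that each $2$-connected graph is generated exactly once.
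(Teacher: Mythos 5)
Your overall skeleton (reduce connectivity in stages, use networks, pass to maps via Whitney's theorem and duality with quadrangulations) is in the same spirit as the paper, but there is a genuine gap at the $3$-connected level, which is exactly where the difficulty of this problem lies. When networks are substituted into the edges of a $3$-connected core to produce \emph{simple} $4$-regular graphs, a double edge of the core behaves differently from a simple edge (at least one of its two parallel edges must receive a nontrivial network), so the cores must be counted \emph{jointly} by the number of simple edges and the number of double edges; that is, one needs the bivariate series $T(u,v)$, together with versions rooted at a simple edge and at a double edge. Your proposal plans to obtain the $3$-connected level by ``Tutte's quadratic method, introducing a catalytic variable marking the root-face degree, \dots encoding simplicity along the way.'' This is under-specified precisely where the problem was open: no enumeration scheme for $3$-connected quadrangulations (equivalently, $3$-connected $4$-regular maps) was known before this paper, and, more importantly, a univariate map count cannot be inverted to recover the bivariate $T(u,v)$ --- the paper stresses that $T(u,v)$ cannot be recovered uniquely from the univariate series $M(z)$. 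The paper's central device, which your proposal lacks, is to enrich the enumeration of \emph{all} $4$-regular maps (dually, of all quadrangulations) by a secondary parameter, the number of isolated faces of degree $2$ (dually, isolated vertices of degree $2$), obtaining $M(z,w)$; to write the substitution relation $H = T\left(q(1+D)^2,\, w+q(2D+D^2)+F\right)/(1+D)$ coming from the decomposition into $3$-connected components; and then to invert the change of variables $u=q(1+D)^2$, $v=w+q(2D+D^2)+F$, whose Jacobian at the origin is $1$, so as to solve for $T(u,v)$. Without this idea, or an equivalent one, your scheme does not close.

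Two further discrepancies are worth noting. First, the paper never invokes the quadratic method: the series $S(z)$ of quadrangulations without diagonals or separating quadrangles is taken from the literature, and the rest follows from positive recursive systems (diagonal decompositions of quadrangulations, refined by the degree-$2$ parameter) plus the inversion above. Second, your connected-to-$2$-connected reduction via the block decomposition, with bookkeeping of how degree $4$ splits at cut vertices, is avoidable and would force you to enumerate $2$-connected graphs whose vertices have degrees $2$ and $4$; the paper instead observes that $4xC'(x)$ counts connected $4$-regular graphs with a distinguished vertex and a distinguished incident edge, and these \emph{are} networks, giving directly $4xC'(x) = D(x) - L(x) - L(x)^2 - F(x) - \frac{x^2}{2}D(x)^2$ once the networks whose root edge creates a loop or a multiple edge are removed; cut vertices are absorbed into the loop and series cases of the network decomposition.
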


As a corollary we obtain:
\begin{corollary}\label{coro}
The sequence  $\{g_n\}_{n\ge0}$ is $P$-recursive, that is, it satisfies a linear recurrence with polynomial coefficients.
\end{corollary}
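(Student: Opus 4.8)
The plan is to reduce the statement to the standard equivalence between $P$-recursive sequences and $D$-finite generating functions, and then to prove that $G$ is $D$-finite. Recall that it is a classical fact that a sequence $\{g_n\}$ is $P$-recursive if and only if its generating function satisfies a linear differential equation with polynomial (equivalently, rational) coefficients, i.e.\ is $D$-finite; this holds whether one uses the ordinary or the exponential generating function, since the two are related by Hadamard multiplication with the $P$-recursive sequence $\{n!\}$, and $D$-finiteness is preserved under such Hadamard products. Hence it suffices to show that $G(x)=e^{C(x)}$ is $D$-finite. The starting point is the first-order relation obtained by differentiating $G=e^{C}$, namely
\begin{equation}\label{eq:Gode}
G'(x) = C'(x)\,G(x),
\end{equation}
whose only nonconstant coefficient is $C'$, which is algebraic by Theorem~\ref{th:main}.

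The crucial point is that I would \emph{not} attempt to derive $D$-finiteness of $G$ from $D$-finiteness of the exponent $C$ alone: exponentiation does not preserve $D$-finiteness in general (for example $e^{e^x}$ is not $D$-finite although $e^x$ is), so knowing that $C$ is $D$-finite is insufficient. Instead I would exploit the stronger property that $C'$ is \emph{algebraic}. Let $K=\mathbb{C}(x)(C')$ be the field generated by $C'$ over the rational functions. Since $C'$ is algebraic, $K$ is a finite-dimensional $\mathbb{C}(x)$-vector space, of dimension $d=[K:\mathbb{C}(x)]$ equal to the degree of the minimal polynomial of $C'$; moreover $K$ is closed under differentiation, because differentiating the defining polynomial relation of $C'$ expresses $(C')'$ as a rational function of $x$ and $C'$, hence an element of $K$.

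Next I would iterate \eqref{eq:Gode}. By induction, each derivative of $G$ has the form $G^{(n)}=a_n\,G$ with $a_n\in K$: indeed $a_0=1$, $a_1=C'$, and $a_{n+1}=a_n'+a_n C'$, which remains in $K$ since $K$ is a differential field. Now the $d+1$ elements $a_0,a_1,\dots,a_d$ lie in the $d$-dimensional space $K$, so they are $\mathbb{C}(x)$-linearly dependent: there exist rational functions $r_0,\dots,r_d$, not all zero, with $\sum_{i=0}^{d} r_i a_i=0$. Multiplying by $G$ yields $\sum_{i=0}^{d} r_i G^{(i)}=0$, a linear differential equation for $G$ of order at most $d$ with rational coefficients; clearing denominators gives polynomial coefficients. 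Thus $G$ is $D$-finite, and therefore $\{g_n\}$ is $P$-recursive.

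The only genuine obstacle here is conceptual rather than computational: one must recognise that the $D$-finiteness of $C$ does not suffice and that it is precisely the algebraicity of $C'$ provided by Theorem~\ref{th:main} that makes the argument work, since this is exactly what guarantees the finite dimension $d$ and hence the linear dependence used above. Once this is observed, the remainder is a routine finite-dimensional linear-algebra computation inside the differential field $K$.
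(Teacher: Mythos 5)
Your proof is correct and takes essentially the same route as the paper: both reduce the statement to showing that $G=e^{C}$ is $D$-finite, write $G^{(i)}=a_i\,G$ with $a_i$ lying in the field generated by $C'$ over the rational functions, and use the finite dimension of that field extension (guaranteed by the algebraicity of $C'$ from Theorem~\ref{th:main}) to extract a linear dependence $\sum_i r_i(x)\,G^{(i)}=0$ with rational, hence polynomial, coefficients. Your version merely spells out a few details the paper leaves implicit (closure of $\mathbb{Q}(x)(C')$ under differentiation, the recursion $a_{n+1}=a_n'+a_nC'$, and the warning that $D$-finiteness of $C$ alone would not suffice), so no further changes are needed.
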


We also determine the generating function $\tau(x) = \sum t_n \frac{x^n}{n!}$ of 3-connected 4-regular planar graphs and show that its derivative is algebraic.

\begin{theorem}\label{th:3-conn}
The generating function $\tau'(x)$ is algebraic and is expressible as the  solution of a system of algebraic equations from which one can effectively compute their coefficients.
\end{theorem}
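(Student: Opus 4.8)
The plan is to obtain $\tau'(x)$ as a specialization of the machinery that produces $C'(x)$ in Theorem~\ref{th:main}. The natural strategy follows the classical network-decomposition philosophy for planar graphs: express the class of connected 4-regular planar graphs in terms of its 3-connected ``cores'', so that a relation between $C$ and $\tau$ is obtained through the decomposition into 2- and 3-connected components. Concretely, I would first isolate the edge-rooted (or more precisely, the \emph{half-edge}-rooted) version of 3-connected 4-regular planar graphs. Since in a 4-regular graph every vertex has degree $4$, the right combinatorial object to root is a network-like structure whose poles have a prescribed behaviour compatible with regularity; the generating function of these rooted 3-connected objects appears as one of the intermediate series in the system of equations yielding $C'(x)$.

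The key technical input is Whitney's theorem: a 3-connected planar graph has a unique embedding in the sphere up to reflection, so counting labelled 3-connected 4-regular planar graphs is essentially equivalent to counting the corresponding 3-connected maps (up to the orientation-reversing symmetry). This is where the third byproduct mentioned in the abstract, the enumeration of simple 4-regular rooted maps, enters: the rooted 3-connected 4-regular planar \emph{maps} are enumerated as part of the map-level analysis, and one passes from rooted maps to labelled graphs by the standard dissymmetry/rooting argument, dividing out the rooting and accounting for the automorphism coming from reflection. I would therefore first extract, from the system already built for Theorem~\ref{th:main}, the generating function counting 3-connected components appearing in the decomposition, and identify it with a rooted version of $\tau$.

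The passage from the rooted series to $\tau'(x)$ itself is the heart of the argument. I would use the fact that differentiating an exponential generating function with respect to $x$ corresponds to pointing (marking) a vertex, and relate vertex-pointing in the 3-connected class to the edge- or half-edge-rooting that the map enumeration naturally provides. Because every vertex has degree $4$, there is a clean relation between the number of vertices, edges and rooted objects (by handshaking, $|E| = 2|V|$), so the various rootings differ only by explicit polynomial factors in the counting parameters, and Euler's relation pins down the face count. Combining these linear relations lets me solve for $\tau'(x)$ as an algebraic function in terms of the already-algebraic intermediate series.

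The main obstacle I anticipate is \emph{not} algebraicity per se — once $\tau'$ is expressed rationally in terms of series that Theorem~\ref{th:main} already certifies to be algebraic, algebraicity follows immediately, and effectivity of coefficient extraction is inherited from the same system. The delicate point is rather the bookkeeping of the reflection symmetry and of the rooting: one must correctly account for the labelled graphs that are fixed by the orientation-reversing automorphism (or show their contribution is negligible at the level of generating functions), and correctly translate the map-rooting into graph-pointing so that no spurious factor is introduced. Handling these symmetry corrections carefully, and verifying that the resulting algebraic system is consistent with small-case counts (e.g.\ against the data of Boersma, Duijvestijn and G\"obel~\cite{BDG93}), is where the real work lies.
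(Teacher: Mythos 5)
There is a genuine gap, and it is one of logical direction: your plan is circular. You propose to obtain $\tau'(x)$ by extracting, ``from the system already built for Theorem~\ref{th:main}'', the series of 3-connected components appearing in the decomposition of connected 4-regular planar graphs. But in that system (Lemma~\ref{lem:graphs} in Section~\ref{sec:graphs}) the rooted 3-connected series $T^{(1)}$ and $T^{(2)}$ enter as \emph{known inputs}, not as unknowns determined by the system; the unknowns there are the network series $D,L,S,P,F,S_2,H_1,H_2$. In the paper the dependency runs the other way: Theorem~\ref{th:3-conn} is a prerequisite for Theorem~\ref{th:main}, not a specialization of it. The same reversal affects your appeal to the simple-maps result: Section~\ref{sec:simple-maps} also consumes $T(u,v)$ rather than producing it. A decomposition of connected graphs into 3-connected cores expresses the connected series in terms of $\tau$; to solve it \emph{backwards} for $\tau$ you would need to know the connected series independently, and no such independent knowledge exists at the graph level --- indeed computing $C(x)$ is the whole point of the paper.

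What actually makes $T(u,v)$ (and hence $\tau'$) computable is precisely the ingredient your proposal never supplies: an independently known series against which the Tutte decomposition can be inverted. This exists only at the level of \emph{maps}: all 4-regular maps are in bijection, by duality, with quadrangulations, which are enumerated in Section~\ref{sec:quad} --- crucially with a second variable marking 2-faces, since the univariate series of 4-regular maps cannot determine the bivariate $T(u,v)$. The map-level decomposition (Lemma~\ref{le:maps}) then writes the known series $M_0, M_1, M_0^*$ in terms of the unknown $T$; one solves step by step ($D = M_0^*/(2q^2)$, then $L$, $S_0$, $S_1$, $F$, $P_0$, $P_1$, and finally $H$) and inverts the algebraic change of variables $u=q(1+D)^2$, $v = w+q(2D+D^2)+F$, whose Jacobian at the origin is $1$, to obtain $T(u,v)=\bigl(1+D\bigr)H$ evaluated at the inverse point. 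None of this appears in your argument. The one step you do have essentially right is the last one: by Whitney's theorem and labelled double counting, $8n\,t_n = n!\,T_n$, i.e.\ $8u\tau'(u)=T(u,0)$; note also that your worry about graphs fixed by the orientation-reversing symmetry is a non-issue in this labelled setting, since rooted maps have no nontrivial automorphisms and the factor $2$ inside $8n$ simply records the choice of root face, with no Burnside-type correction required.
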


\medskip
To obtain our results we follow the classical technique introduced by Tutte: take a graph rooted at a directed edge and classify the possible configurations arising from the removal of the root edge.
This produces several combinatorial classes that are further decomposed, typically in a recursive way.
The combinatorial decomposition translates into a system of equations for the associated generating functions, which in our case is considerably involved.
Next we provide a brief overview of the combinatorial scheme in our solution.

Using a variant of the classical decomposition of 2-connected graphs into 3-connected components in the spirit of \cite{bklm2007}, we find an equation linking $C(x)$ to the generating function $T(u,v)$ of 3-connected 4-regular planar graphs, counted according to the number of simple edges and the number of double edges.
Actually, $T$ will be the generating function of rooted 3-connected \emph{maps} (a rooted map is an embedding of a planar graph where a directed edge is distinguished) but by Whitney's theorem, 3-connected planar graphs have a unique embedding in the oriented sphere and in this situation counting graphs is equivalent to counting maps.
As will be seen later, it is essential to count 3-connected maps according to simple and double edges, otherwise there is not enough information to obatin $C(x)$.

Once we have access to $T(u,v)$ we can compute the coefficients of $C(x)$ to any order.
In order to compute $T(u,v)$ we apply the reverse procedure working with maps instead of graphs.
The starting point is the fact that the number $M_n$ of rooted 4-regular maps with $2n$ edges is well known, since they are in bijection with rooted (arbitrary) maps on $n$ edges, and equal to (see Formula (5.1) in \cite{tutte})
$$
	M_n = \frac{2\cdot 3^n}{(n+1)(n+2)}\binom{2n}{n}.
$$
Using again the decomposition into 3-connected components, one can obtain an equation linking $T(u,v)$ and $M(z)  = \sum M_n z^n$.
However this is not sufficient since $T(u,v)$ is a bivariate series and cannot be recovered uniquely from the univariate series $M(z)$.
In order to overcome this situation, we enrich the combinatorial scheme and count maps according to a secondary parameter: the number of isolated faces of degree 2, namely, those not incident with another face of degree 2.
Notice that this parameter, when restricted to 3-connected 4-regular planar graphs, is precisely the number of double edges.

If $M(z,w)$ is the associated series, where $w$ marks the new parameter, then we can enrich the corresponding equations and obtain an algebraic relation of the form
$$
	T(f(z,w,M(z,w)),g(z,w,M(z,w))) = h(z,w,M(z,w)),
$$
where $f,g$ and $h$ are explicit functions.
The transformation
$$
	u=f(z,w,M), \qquad v=g(z,w,M)
$$
turns out to have non-zero Jacobian and can be inverted explicitly.
This allows us to express $T(u,v)$ as a power series whose coefficients can be computed in terms of those of $M(z,w)$ and the inverse mapping $(z,w)\to (u,v)$. From here, we can compute the coefficients of $T(u,v)$ to any order.
In particular, the coefficients of $T(u,0)$ give the numbers $T_n$ of simple rooted  3-connected 4-regular planar maps.
By double counting, we obtain the number of labelled 3-connected 4-regular planar graphs as $t_n = T_n (n-1)!/8$.

It remains to compute $M(z,w)$.
To this end, we use the dual bijection between 4-regular maps and quadrangulations, and count quadrangulations according to the number of faces and the number of vertices of degree 2 not adjacent to another vertex of degree 2.
This is technically demanding but can be achieved using the decomposition of quadrangulations along faces and edges, refined to take into account the new parameter.

Once we have access to $T(u,v)$, we can also enumerate \emph{simple} 4-regular maps, a result of independent interest (the enumeration of simple 3-regular maps can be found in  \cite{cubicMaps}).

\begin{theorem}\label{th:maps}
The generating function of rooted simple 4-regular  maps is algebraic, and is expressible as the solution of a system of algebraic equations from which one can compute  its coefficients effectively.
\end{theorem}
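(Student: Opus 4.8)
The plan is to recycle the 3-connected-component decomposition already developed to prove Theorem~\ref{th:main}, specialized so that the resulting object is \emph{simple}, i.e.\ has neither loops nor multiple edges. The key observation is that a 4-regular map is simple precisely when it has no face of degree~$1$ (a loop) and no face of degree~$2$ (a digon, equivalently a double edge), so the whole task reduces to running the network decomposition while systematically forbidding the local configurations that create such faces. The crucial advantage is that $T(u,v)$ is now available: inside every 3-connected core it already separates the simple edges (marked by $u$) from the double edges (marked by $v$), which is exactly the information needed to control where digons can appear.

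First I would recall, in the spirit of the decomposition used above, how an arbitrary rooted 4-regular map breaks into its 2- and 3-connected components, expressing the map generating function through two-pole \emph{networks} composed in series and in parallel, with the polyhedral (3-connected) networks supplied by $T$. I would then restrict to simple objects by introducing the corresponding \emph{simple networks}, obtained by deleting the root edge, and classify them into the same series, parallel and polyhedral types. The simplicity constraint enters through two local conditions at each composition: a series composition must not close up a loop, and a parallel composition must not produce a digon. Concretely this forbids the parallel composition of two single edges (which would create a double edge) and excludes the analogous loop-creating series configuration at the root.

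Next I would translate these restricted compositions into a system of algebraic equations for the generating functions of simple networks and of simple 4-regular rooted maps. Here the double-edge variable $v$ of $T(u,v)$ plays the central role: each double edge of a 3-connected core must be \emph{resolved} by substituting into its two parallel copies networks that do not both reduce to a single edge, so that no digon survives in the final map, whereas each simple edge is substituted subject to the usual simplicity bookkeeping. Assembling these contributions yields a closed system in which $T(u,v)$ appears through explicit substitution of the network series into the arguments $u$ and $v$, after which the map series is recovered by rooting at a directed edge.

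Finally, since $T(u,v)$ is algebraic by Theorem~\ref{th:3-conn} and all the composition rules are algebraic operations (sums, products and compositions of algebraic series), the resulting system is algebraic; solving it and extracting coefficients with a computer algebra system gives the enumeration. I expect the main obstacle to be the correct and exhaustive enforcement of simplicity throughout the decomposition — in particular making sure that no digon is created either by a parallel composition at an internal 2-cut or by the substitution of a trivial network into a double edge of a 3-connected core — because a single overlooked configuration would silently over- or undercount the multiple edges.
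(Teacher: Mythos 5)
Your overall plan --- reuse the network decomposition into 3-connected components with $T(u,v)$ supplying the cores, translate the composition rules into an algebraic system, and extract coefficients --- is indeed how the paper proceeds (its Section 5 adapts the graph equations of Section 4 to maps). But the proposal has two genuine gaps. The first is that your ``key observation'' is false: simplicity of a 4-regular map is \emph{not} equivalent to the absence of faces of degree 1 and 2. Two parallel edges bound a face of degree 2 only when one of the two regions they determine is empty of other vertices and edges; otherwise no small face appears. For instance, take the 4-bond between $u$ and $v$ and substitute nontrivial networks into two opposite edges: the two remaining bare edges form a double edge, yet every face of the resulting map has degree at least 3. The same phenomenon occurs for loops with material on both sides. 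So ``forbid the local configurations that create faces of degree 1 or 2'' is not the same task as forbidding loops and multiple edges, and a system built on that face-local criterion would miscount. The paper instead controls multiplicity structurally: its networks are maps in which deleting the root edge leaves a simple map, the recursive equations never allow two bare edges in parallel away from the root, and the maps that are non-simple \emph{at} the root are removed only at the very end by the subtraction $M(x) = D(x) - L(x) - L(x)^2 - 3x^2D(x)^2 - 2F(x)$.

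The second gap is more serious. You resolve a double edge of a 3-connected core only by ``substituting into its two parallel copies networks that do not both reduce to a single edge.'' That misses an entire family of simple maps: the digon face bounded by a core double edge can also be \emph{replaced} by a connected piece glued to its two endpoints, with the two original parallel edges deleted (the paper's class $\cF$, split as $\cS_2\cup\cH_2$). Such configurations cannot be produced by edge substitution: substituting into the two copies either leaves one copy as a real edge between the poles, or yields an interior that falls into two components when the poles are deleted, whereas the $\cF$-configurations have no edge joining the poles and connected interior. Without them the system undercounts. Handling this class is exactly what forces the paper to introduce a \emph{second} core series $T_2(u,v)$, counting 3-connected 4-regular maps rooted at a double edge. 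This series is not read off directly from $T(u,v)$: it is tied to $T_1$ by the differential relation $u\frac{\partial}{\partial u}T_2 = 2v\frac{\partial}{\partial v}T_1$, and proving it algebraic requires a separate argument (an auxiliary subsystem $F = S_2 + H_2$ in which $H_2$ is expressed through $T_2$ and then inverted). Your closing worry that ``a single overlooked configuration would silently over- or undercount'' is apt; this is precisely the configuration your sketch overlooks.
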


The  steps   towards the proofs of Theorems \ref{th:main} and \ref{th:maps} are summarized in the following diagram.

\medskip

\begin{center}
\begin{tabular}{c}
	\minibox[frame]{Simple quadrangulations  (Section 2.1)} \\
	$\big\Downarrow$ \\
	\minibox[frame]{Arbitrary quadrangulations  $\longleftrightarrow$  4-regular maps (Section 2.2)} \\
		$\big\Downarrow$ \\
 \minibox[frame]{3-connected  4-regular graphs
	 $\longleftrightarrow$ 3-connected  4-regular maps  (Section 3)} \\
	$\big\Downarrow$ \hskip5cm $\big\Downarrow$  \\
\minibox[frame]{ 4-regular graphs  (Section 4)}  \qquad \minibox[frame]{Simple 4-regular maps (Section 5)}
	\end{tabular}
\end{center}

\bigskip

We remark that the final equations relating the power series $M(z,w)$, $T(u,v)$ and $C(x)$ are  not written down explicitly.
Instead, we work with several intermediate systems of equations that allow us to extract the coefficients of the corresponding series.
This is computationally demanding but it is within the capabilities of a computer algebra system such as \texttt{Maple}.

Here is a summary of the paper.
In Section \ref{sec:quad}, we determine the series $M(z,w)$ as the solution of a system of polynomial equations.
In Section \ref{sec:3conn}, we  obtain $T(u,v)$ as a computable function of $M(z,w)$, thus proving the second part of Theorem~\ref{th:main}.
In Section \ref{sec:graphs}, we find an equation connecting $C(x)$ and $T(u,v)$, which allows us to compute the coefficients of $C(x)$, that is, the number of connected 4-regular planar graphs, and to complete the proof of Theorem \ref{th:main}.
From the relation $G(x) = \exp(C(x))$, we obtain the coefficients of $G(x)$. Finally, in Section \ref{sec:simple-maps} we count simple 4-regular maps.

\section{Counting quadrangulations}\label{sec:quad}

All maps in this paper  are rooted; this means that an edge $uv$ is marked and directed from $u$ to $v$.
Then $u$ is the root vertex and the face to the right of $uv$ is the root face, which by convention is taken to be the outer face.
A rooted map has no non-trivial automorphism, hence all vertices, edges and faces are distinguishable.

A quadrangulation is a planar map in which all faces have degree 4.
{Vertices in the outer face are called \emph{external} vertices.}
{A \emph{diagonal} in a quadrangulation is a path of length 2 joining opposite vertices of the external face and whose middle vertex is not external.}
If $uv$ is the root edge, there are two kind of diagonals, those incident with $u$ and those incident with $v$.
By planarity not both types can be present at the same time.
A cycle of length 4 which is not the boundary of a face is called a \emph{separating quadrangle}.
A vertex of degree 2 is \emph{isolated} if it is not adjacent to another vertex of degree 2.
An isolated vertex of degree 2 is called a \emph{2-vertex}.

\subsection{Simple quadrangulations}

All quadrangulations in this section  are simple, that is, have no multiple edges. This implies in particular that all faces are quadrangles, that is, simple 4-cycles.
Following \cite{ms68}, we consider the following classes of quadrangulations, illustrated in Figure \ref{fig:simple_quad}:

\begin{itemize}
    \item
        $\cQ$ are all (simple) quadrangulations.

    \item
        $\cS$  are quadrangulations {with at least 8 vertices and} without diagonals or separating quadrangles.

    \item
        $\cN$ are quadrangulations containing a diagonal incident with the root vertex. By symmetry they are in bijection with quadrangulations containing a diagonal not incident with the root vertex.

    \item
        $\cN_i$ are quadrangulations in $\cN$ with exactly $i$ external 2-vertices, for $i = 0,1,2$.

    \item
        $\cR$  are quadrangulations obtained from $\cS$ by possibly replacing each internal face with a quadrangulation in $\cQ$.
\end{itemize}

\noindent{Observe that the map consisting of a single quadrangle is in $\cQ$, but is not  in any of the other classes.}

\begin{figure}[htb]
\centering
	\includegraphics[scale=1.]{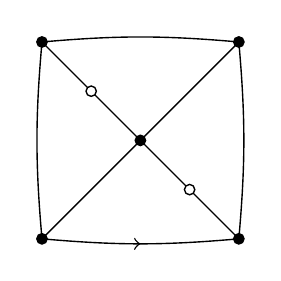}
	\includegraphics[scale=1.]{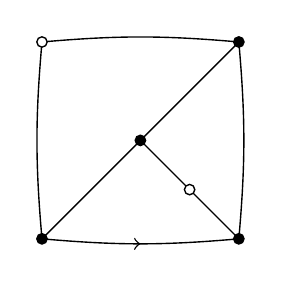}
	\includegraphics[scale=1.]{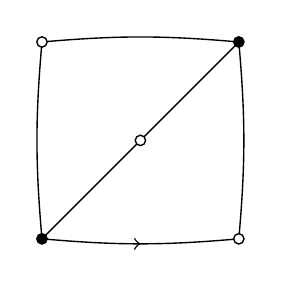}
	\includegraphics[scale=1.]{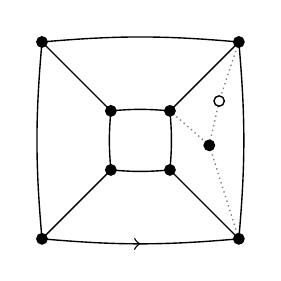}
	\caption{\em  Examples of simple quadrangulations.
		The first three, from left to right, belong to $\cN_0$, $\cN_1$ and $\cN_2, respectively$.
 		The last one is in $\cR$: a quadrangulation from $\cS$ in which one face has been replaced with a quadrangulation in $\cN_1$.
 		Isolated vertices of degree 2 are shown in white.}
	\label{fig:simple_quad}
\end{figure}

\noindent
In the following generating functions, $z$ marks internal faces and $w$ marks 2-vertices.
For each class of quadrangulations we have the corresponding generating function written with the same letter.
For instance $Q(z,w)$ is associated with the class $\cQ$, and so on.
The exception is $S(z)$, since a quadrangulation in $\cS$ has no vertex of degree 2, and variable $w$ does not appear.

The generating function $S(z)$ is well-known, since $\cS$ is in bijection with rooted 3-connected planar maps counted by the number of edges.
The generating function of 3-connected maps according to vertices and edges is given in Equation (9) from \cite{BGW2002} as $M_3(x,z)$.
Setting $x = 1$, we obtain that $S(z)$ is given by
$$
	S(z) = \ds\frac{2z}{1+z} -z - \frac{U(z)^2}{z(1+2U(z))^3}, \qquad
	U(z) = z(1+U(z))^2.
$$
The smallest term in $S(z)$ is $z^5$, corresponding to the graph of the cube.

The following lemma guarantees the existence of a unique non-zero solution to a system of non-negative equations.

\begin{lemma}\label{positive}
	Let $y_1(z,u), \dots, y_m(z,u)$ be power series satisfying the system of equations
	$$
		\begin{array}{lcl}
			y_1 &=& F_1(z, y_1, \dots , y_m, u), \\
			y_2 &=& F_2(z, y_1, \dots , y_m, u),\\
			&\vdots& \\
			y_m &=& F_m(z, y_1, \dots , y_m, u),\\
		\end{array}
	$$
	where the $F_i $ are power series in the variables indicated.

	Assume that for each $i$, $F_i$ has non-negative  coefficients and is divisible by~$z$.
	Assume also that there exists a solution $\mathbf{\widehat{y}}=(y_1(z,u),\dots,y_m(z,u))$ to the system which is not identically $0$ for all $i$.
	Then this is the unique  solution with non-negative coefficients.
	
	Moreover, the solution can be computed iteratively from the initial values $y_i=0$ up to any degree of $z$.
\end{lemma}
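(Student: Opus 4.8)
The plan is to treat the system as a fixed-point equation in the ring of formal power series and to exploit the divisibility by $z$ to run a convergent iteration. Write $\mathbf{y} = (y_1, \dots, y_m)$ and let $\Phi(\mathbf{y}) = (F_1(z, \mathbf{y}, u), \dots, F_m(z, \mathbf{y}, u))$, so that a solution is exactly a fixed point of $\Phi$. First I would record the basic consequence of the hypotheses: since each $F_i$ is divisible by $z$, every solution satisfies $[z^0]y_i = 0$, i.e.\ has zero constant term in $z$. This is what makes the composition $F_i(z, \mathbf{y}, u)$ well defined as a formal power series even when $F_i$ is a genuine power series (and not merely a polynomial) in $y_1, \dots, y_m$: substituting series with vanishing constant term, only finitely many monomials of $F_i$ contribute to each coefficient.

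The heart of the argument is that $\Phi$ is a contraction for the $z$-adic metric. Writing $F_i = z\,G_i$ with $G_i$ a power series with non-negative coefficients, I would show that if two admissible tuples $\mathbf{y}$ and $\widetilde{\mathbf{y}}$ (each with zero constant term) agree modulo $z^{d+1}$, then $G_i(z,\mathbf{y},u) \equiv G_i(z,\widetilde{\mathbf{y}},u) \pmod{z^{d+1}}$, and hence $F_i(z,\mathbf{y},u) \equiv F_i(z,\widetilde{\mathbf{y}},u) \pmod{z^{d+2}}$. Equivalently, at the level of coefficients, $[z^n]F_i(z,\mathbf{y},u) = [z^{n-1}]G_i(z,\mathbf{y},u)$ depends only on the coefficients $[z^k]y_j$ with $k \le n-1$, because each factor $y_j$ substituted into $G_i$ carries $z$-degree at least $1$. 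Thus $\Phi$ strictly increases the order of agreement, which is the key technical point; the only real obstacle is tracking this degree bookkeeping carefully and checking that the dependence of each $F_i$ on the $y_j$ through an honest power series, rather than a polynomial, causes no trouble.

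From the contraction property two things follow. For uniqueness, I would argue by induction on $n$ that any solution has its coefficient $[z^n]y_i$ determined by the coefficients of strictly lower $z$-degree, the base case $[z^0]y_i = 0$ being forced as above; hence two solutions with zero constant term coincide to all orders and are equal. Since the hypotheses provide a solution $\widehat{\mathbf{y}}$ that is not identically zero, it must be this unique one. For the ``moreover'' statement I would use the Picard iteration $\mathbf{y}^{[0]} = \mathbf{0}$, $\mathbf{y}^{[k+1]} = \Phi(\mathbf{y}^{[k]})$: starting from $\mathbf{0}$, which already agrees with $\widehat{\mathbf{y}}$ up to degree $0$, the contraction shows $\mathbf{y}^{[k]}$ agrees with $\widehat{\mathbf{y}}$ up to degree $k$ in $z$, so the iteration computes the solution to any prescribed order. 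Finally, non-negativity of the coefficients is inherited along the iteration: $\mathbf{y}^{[0]} = \mathbf{0}$ is non-negative and $\Phi$ preserves non-negativity because the $F_i$ have non-negative coefficients and non-negativity is closed under the sums and products occurring in the substitution, so the limit $\widehat{\mathbf{y}}$ has non-negative coefficients. This confirms in particular that $\widehat{\mathbf{y}}$ is the unique solution among non-negative power series, as claimed.
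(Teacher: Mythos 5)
Your proof is correct and takes essentially the same route as the paper's: a Picard iteration starting from $\mathbf{y}^{(0)}=\mathbf{0}$, using divisibility of each $F_i$ by $z$ to get a contraction in the $z$-adic (formal power series) topology, from which uniqueness, computability of the solution by iteration, and non-negativity of the limit all follow. If anything, your write-up is more careful than the paper's on two points it leaves implicit: the well-definedness of substituting series with zero constant term into the $F_i$, and the coefficient-by-coefficient induction that turns the contraction property into uniqueness.
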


\begin{proof}
	We first recall that the order of a non-zero power series $A(z)=\sum a_n z^n$ is the minimum $n$ such $a_n \ne 0$, and that a sequence $\{A_m(z)\}_{m\ge0}$ is convergent in the ring of formal power series if the order of the $A_m(z)$ go to infinity.

	Let $\mathbf{F} = (F_1,\dots, F_m)$.
	Start with the initial value $\mathbf{y}^{(0)}(z,u)=0$ and let $\mathbf{y}^{(k+1)}(z,u)= \mathbf{F}(z,\mathbf{y}^{(k)}(z,u),u)$, where $\mathbf{y}^{(k)}(z,u) = (y_1^{(k)}(z,u), \dots, y_m^{(k)}(z,u))$.
	Since the $F_i$ have non-negative coefficients, so do the $\mathbf{y}^{(k)}$.
	Since each $F_i$ is divisible by $z$, the mapping $\mathbf{F}$ is a contraction, in the sense that the order of $y_i^{(k+1)}(z,u)$ is larger than the order of  $y_i^{(k)}(z,u)$.
	Hence the solution is unique and is given by the limit of the $\mathbf{y}^{(k)}(z,u)$. The solution is non-zero since the $F_i$ are non-zero.
\end{proof}

The former proof gives a procedure for computing iteratively the unique solution.
Start with $y_i=0$ for all $i$ and compute the $y_i^{(k)}$ iteratively.
Each ${y}_i^{(k)}(z,u)$ is a polynomial and, because of the hypothesis on the $F_i$, ${y}_{i+1}^{(k+1)}(z,u)= {y}_i^{(k)}(z,u) + M_i$, where $M_i$ is a monomial of degree larger than the degree of ${y}_i^{(k)}(z,u)$.
This can be iterated to any desired degree.
Observe also that the variable $u$ plays only the role of a parameter and that further parameters can be added without any change.

For the various systems of equations that we encounter it is always easy to check that the conditions of the previous result are satisfied.
The existence of a non-zero solution with non-negative coefficients follows from the combinatorial power series involved in the system, like in the following lemma.

\begin{lemma}\label{lem:Q}
	Let
	$$
		N = N_0 + N_1 + N_2, \qquad \widetilde{N} = N_0 + \frac{N_1}{w} + \frac{N_2}{w^2}.
	$$
	Then the following system of equations holds:
	\begin{equation}\label{eqsQ}
		\renewcommand\arraystretch{1.5}
		\begin{array}{ll}
	 		Q =& z+2N+R, \\
		 	R =& S(z+2\widetilde{N}+R), \\
	 		N_0 =& (\widetilde{N} + R)\left(\widetilde{N} + R + N_0 + \ds\frac{N_1}{2w}\right), \\
 	  		N_1 =& 2zw\left(\widetilde{N} + R + N_0 + \ds\frac{N_1}{2} \right), \\
	 		N_2 =& z^2w^3 + zw\left(\ds\frac{N_1}{2}+ N_2 \right).
		\end{array}
	\end{equation}
	Moreover, the system has a unique non-zero solution with non-negative coefficients.
\end{lemma}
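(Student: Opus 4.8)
The plan is to read the five displayed identities as combinatorial decompositions of the classes $\cQ$, $\cR$, $\cN_0$, $\cN_1$, $\cN_2$, and then to invoke Lemma~\ref{positive} for existence and uniqueness. Since the combinatorial generating functions are non-zero, once they are shown to satisfy the system they automatically supply the non-zero non-negative solution required by that lemma, so essentially all the content lies in deriving the equations; uniqueness then follows formally.

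I would first dispose of the two structural equations. For $Q = z + 2N + R$, classify a simple quadrangulation by whether it has a diagonal: it is the single quadrangle (the term $z$, one internal face and no $2$-vertex), or it carries a diagonal, which by planarity is incident with exactly one of the two opposite corners $u,u'$ of the root face, the two cases being exchanged by the reflection of the map and each enumerated by $N$ (whence $2N$), or it is diagonal-free and then lies in $\cR$. For $R = S(z + 2\widetilde N + R)$ I would use the substitution of \cite{ms68}: every diagonal-free quadrangulation arises from a unique core in $\cS$ by inflating each internal face with an arbitrary member of $\cQ$. The point to check is the marking of $2$-vertices: the four corners of an inflated face are vertices of the $3$-connected core and keep degree $\ge 3$, so any external $2$-vertex of the inserted piece stops being a $2$-vertex. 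This is exactly why the argument of $S$ uses $\widetilde N$ (external $2$-vertices unmarked) for the diagonal part, and the unchanged $R$ for the $\cR$-part, since a member of $\cR$ has no external $2$-vertex, its external vertices again coming from the $3$-connected core.

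The heart of the proof is the system for $N_0,N_1,N_2$, which I would obtain by peeling the diagonal structure of a quadrangulation in $\cN$. Writing the root face as $u,v,u',v'$ with the diagonal incident to $u$, the diagonals $u - m - u'$ are linearly ordered by planarity from the $v$-side to the $v'$-side, cutting the map into strips; peeling the outermost strip removes one internal face and returns a smaller quadrangulation still carrying a diagonal at $u$. This produces the recursive right-hand sides: the product form of $N_0$ reflects that, when neither $v$ nor $v'$ is a $2$-vertex, the two extreme strips decompose independently, each contributing a factor of shape $\widetilde N + R + \cdots$; the linear forms of $N_1$ and $N_2$ reflect that an external $2$-vertex forces the adjacent strip to be a single face, so that peeling it multiplies the count by $zw$ and lowers the number of external $2$-vertices. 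The base monomial $z^2w^3$ of $N_2$ is the double quadrangle: the root face $u,v,u',v'$ with a single diagonal vertex $m$, having two internal faces and three $2$-vertices, namely $v$, $v'$ and $m$. The delicate point is tracking the fate of the central vertices $m$ and of $v,v'$, each of which becomes internal when a strip is peeled and may or may not remain a $2$-vertex; this is the origin of the substitutions $N_1/w$ and $N_2/w^2$ in $\widetilde N$ and of the symmetry factors $\ds\frac12$ and $\ds\frac{1}{2w}$, which absorb the reflection exchanging the two sides of the diagonal and the choice of which side-vertex carries the marked $2$-vertex. Pinning down these rational factors is the main obstacle, and I would confirm each by matching the lowest-order terms against $\cN_0,\cN_1,\cN_2$ computed by hand.

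Finally, for existence and uniqueness I would put the system in the shape required by Lemma~\ref{positive}. The only issue is that $\widetilde N$ and the term $N_1/(2w)$ carry negative powers of $w$; I would remove them by the change of variables $\overline N_1 = N_1/w$ and $\overline N_2 = N_2/w^2$, legitimate because each element of $\cN_1$ (resp.\ $\cN_2$) has exactly one (resp.\ two) external $2$-vertices, so $N_1$ is divisible by $w$ and $N_2$ by $w^2$. In the rewritten variables $Q,R,N_0,\overline N_1,\overline N_2$ every right-hand side is a power series with non-negative coefficients and is divisible by $z$: the composition with $S$, which starts at $z^5$, and the explicit factors $z$ and $z^2$ make this immediate. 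As the classes are non-empty, the combinatorial generating functions provide a non-zero non-negative solution, and Lemma~\ref{positive} then gives that it is the unique such solution and that it is computable iteratively, completing the proof.
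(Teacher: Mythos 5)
Your overall skeleton is sound and coincides with the paper's: the dichotomy giving $Q=z+2N+R$, the substitution into an $\cS$-core giving $R=S(z+2\widetilde N+R)$ with the correct explanation of why $\widetilde N$ (and not $N$) appears, and the reduction to Lemma~\ref{positive} via the divisibility of $N_i$ by $w^i$ and of every right-hand side by $z$. The genuine gap is exactly where you locate the ``main obstacle'': the three equations for $N_0,N_1,N_2$, which are the real content of the lemma. Proposing to ``confirm each rational factor by matching the lowest-order terms computed by hand'' is not a proof --- an identity of power series cannot be established by checking finitely many coefficients, and the factors $N_1/(2w)$ versus $N_1/2$, the summand $N_0+\frac{N_1}{2w}$, and the prefactor $2$ in the $N_1$-equation are precisely what must be derived. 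Moreover, the decomposition you sketch is misstated: peeling the region between the root edge and the nearest diagonal removes an entire sub-quadrangulation (possibly with many internal faces), not ``one internal face'', and what remains need not carry another diagonal at $u$; likewise the product in the $N_0$-equation does not come from ``the two extreme strips decomposing independently'', but from cutting at the single rightmost diagonal: one factor is the piece on the root side, constrained to have no diagonal incident with the root vertex (contributing $\widetilde N+R$), and the other is everything beyond that diagonal, whose possible further diagonals at $u$ are what produce the recursive terms.

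What is actually needed, and what the paper supplies, is the degree bookkeeping after gluing. View a map in $\cN$ as the quadrangle-with-diagonal $A$ whose inner faces $f_1$ (incident with the root edge) and $f_2$ are replaced, with the above constraint on $f_1$'s replacement. One then tracks which external $2$-vertices of the inserted pieces survive: a $2$-vertex of the piece in $f_2$ sitting at its top vertex is identified with the external vertex $v'$ and keeps degree $2$, whereas a $2$-vertex incident with its root edge is identified with the diagonal's middle vertex $m$, which keeps degree $2$ exactly when $f_1$ is \emph{not} replaced. By the reflection fixing the root vertex, exactly half of $\cN_1$ has its unique external $2$-vertex in each position; this yields $N_1/(2w)$ in the $N_0$-equation ($f_1$ replaced, so the vertex at $m$ loses degree $2$) but $N_1/2$ in the $N_1$- and $N_2$-equations ($f_1$ a single face, so it survives). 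Finally, the prefactor $2$ in $N_1=2zw(\cdots)$ is not an automatic symmetry factor: the case where the unique external $2$-vertex is the top vertex must be handled by re-running the decomposition along the \emph{leftmost} diagonal, which by reflection gives the same expression as the first case. Without these arguments your system is a guess validated on small cases, not a theorem.
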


\begin{proof}
	First we check that the system has non-negative coefficients.
As $S(z)$ has non-negative Taylor coefficients, we only need to argue on the terms $N_1/w$ and $N_2/w^2$.
But by definition, quadrangulations in $\cN_i$ have at least  $i$ 2-vertices, and the generating function $N_i$ has $w^i$ as a factor.
It is immediate to check that all the right-hand terms are divisible by $z$, hence Lemma \ref{positive}  guarantees the last claim in the statement.

The first equation follows from the fact that a quadrangulation, not reduced to a single quadrangle, either has a diagonal or is obtained from a quadrangulation in $\cS$ by replacing internal faces with arbitrary quadrangulations in $\cQ$.

The second equation expresses the recursive nature of the class $\cR$.
Notice that the substitution inside $S$ contains the term $\widetilde{N}$ instead of $N$. The reason is that vertices of degree 2 in the outer face become vertices of degree more than two after  substitution.

For the rest of the proof, notice that the left-hand terms in the last three equations correspond to  quadrangulations in $\cN$, whose diagonal is incident to the root vertex.
They are decomposed following their rightmost diagonal, that is, the first diagonal to the left of the root edge.
Let $A$ be the quadrangulation consisting of a single quadrangle with a diagonal {adjacent to the root vertex}.
It has two inner faces: $f_1$, incident with the root edge, and $f_2$.
A quadrangulation in $\cN$ is obtained by replacing the inner faces of $A$ with simple quadrangulations, such that the quadrangulation replacing $f_1$ does not have a diagonal incident with the root vertex, as otherwise
the diagonal of $A$ would not be the rightmost diagonal of the resulting quadrangulation, contrary to the construction.
In what follows, the {\em top vertex} is the external vertex adjacent to the root vertex and not incident with the root edge.

\emph{Equation for $N_0$.}
Both $f_1$ and $f_2$ can be replaced either with quadrangulations in $\cR$ or those with a diagonal not adjacent to the root vertex (which are in bijection with $\cN$), hence the factor $\widetilde{N} + R$.
In addition, $f_2$ can be replaced by a quadrangulation with a diagonal adjacent to its root vertex, but only if the top vertex is not of degree 2, that is, any quadrangulation in $\cN_0$ and half of the ones in $\cN_1$ (those in which the top vertex is not of degree 2).

{\emph{Equation for $N_1$}. We first assume  that the unique external $2$-vertex is incident with the root edge. Then $f_1$ is not replaced, and (as in the analysis of $N_0$) $f_2$ can be replaced $\widetilde{N}+R+N_0+\frac{N_1}{2}$. The only difference  with the analysis of $N_0$ is the term $N_1/2$ (instead of $\frac{N_1}{2w}$), as the $2$-vertex remains of degree $2$. Hence, in this situation we obtain  $zw\left(\widetilde{N}+R+N_0+\frac{N_1}{2}\right)$.
Finally, the case where the unique external $2$-vertex is the top vertex can be deduced in the same way by decomposing the quadrangulation using the leftmost diagonal with the respect to the root edge instead of the rightmost diagonal.}

\emph{Equation for $N_2$.}
In this case face $f_1$ is not replaced.
If neither is $f_2$, we get $A$, hence the term $z^2w^3$.
Else, $f_2$ is replaced with a quadrangulation in $\cN$ whose top vertex is of degree 2, corresponding to the term $N_2+N_1 /2$, as before.
\end{proof}

From the previous system of equations we can compute the coefficients to any order of all the series involved by iteration.
As we are going to see, a modified version of the series $Q$, $N_0$, $N_1$, $N_2$ is needed in  Section \ref{sec:arbitrary}.

\subsection{Arbitrary quadrangulations}\label{sec:arbitrary}

A quadrangulation of a 2-cycle is a rooted map in which each face is of degree 4 except the outer face which is of degree 2.
One of the two edges in the outer face is taken as the root edge, and its tail is the root vertex.
An arbitrary quadrangulation is obtained from a simple quadrangulation by replacing edges with quadrangulations of a 2-cycle.
Conversely, given an arbitrary quadrangulation, collapsing all maximal 2-cycles, one obtains a simple quadrangulation.
Notice that among the simple quadrangulations, we need to include the degenerate case consisting of a path of length 2, which we denote by $P_3$  (see Figure \ref{fig:rootface} and the corresponding caption).

In an arbitrary quadrangulation there are three possibilities for the shape of the root face: it is either a quadrangle, the result of gluing two 2-cycles through a vertex, or gluing one 2-cycle and one edge (see Figure \ref{fig:rootface}).

\begin{figure}[htb]
\centering
	\raisebox{.5\height}{\includegraphics[scale=1]{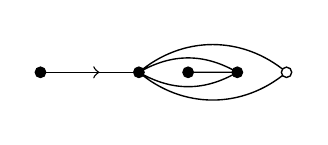}}
	\raisebox{.5\height}{\includegraphics[scale=1]{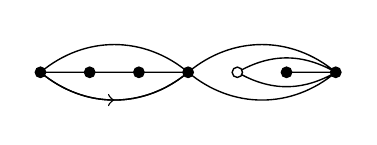}}
	\includegraphics[scale=1]{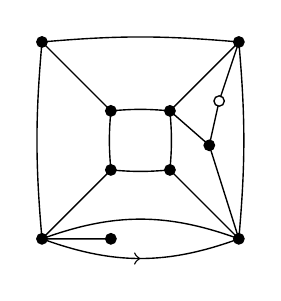}
\caption{\em The  different types of quadrangulations in $\cB$, depending on the nature of the root face.  Isolated vertices of degree 2 are shown in white.}
\label{fig:rootface}
\end{figure}

We define the following classes of quadrangulations:

\begin{itemize}
	\item
	$\cA = \cA_0 \cup \cA_1$ are quadrangulations of a 2-cycle.
	$\cA_1$ are those whose root vertex is a 2-vertex (by symmetry, they are in bijection with those in which the other  external vertex  is a 2-vertex), and $\cA_0$ are those without  external 2-vertices.
	
	\item
	$\cB= \cB_0 \cup \cB^*_0 \cup \cB_1$ are arbitrary quadrangulations.
	$\cB_1 $ are those in which the root edge is incident to exactly one 2-vertex, and $\cB_0 \cup \cB^*_0$ are those in which the root edge is not incident to a 2-vertex.
	Furthermore, $\cB_0^*$ are the quadrangulations obtained by replacing one of the two edges incident with the root edge on the single quadrangle,
	as illustrated in Figure~\ref{fig:quad_M0'_M1}.
	The class $\cB^*_0$ is introduced for technical reasons that will become apparent in the next section, when we consider the dual class of 4-regular maps.
\end{itemize}

In the generating functions $A_0(z,w)$ and $A_1(z,w)$, variables $z$ and $w$ mark, respectively, internal faces and 2-vertices, whereas in $B_0(z,w)$, $B_1(z,w)$ and $B^*_0(z,w)$ variable $z$ marks {\it all} faces: it is important to keep in mind this distinction when checking the equations satisfied by the various generating functions.
An exception, which again becomes clear when passing to the dual, is the term $2z$ encoding the path $P_3$ (which can be rooted in two different ways), where the middle vertex is not considered to be a 2-vertex.

\begin{figure}[htb]
\centering
	\begin{minipage}[c]{.55\textwidth}
		\centering
    		\includegraphics[scale=0.7]{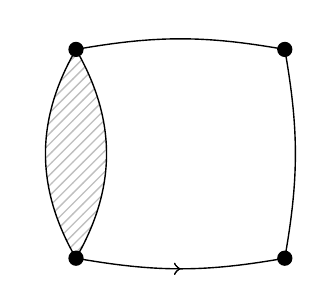}
    		\includegraphics[scale=0.7]{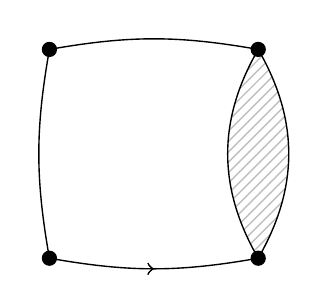}
	\end{minipage}
	\begin{minipage}[c]{.4\textwidth}
		\centering
    		\includegraphics[scale=1.2]{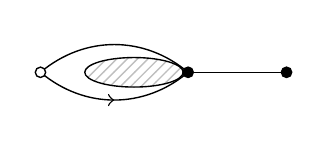}\\
    		\includegraphics[scale=1.2]{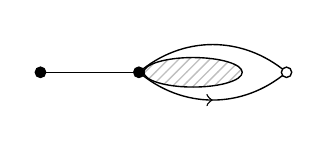}
	\end{minipage}
	\caption{\em
		On the left are the two types of quadrangulations in $\cB^*_0$.
		On the right are two types of substitutions of $P_3$ counted in $\cB_1$ .
}
	\label{fig:quad_M0'_M1}
\end{figure}

The next generating function encodes the substitution of edges in simple quadrangulations:
$$
	\widetilde{A} = A_0 + \frac{2A_1}{w}.
$$
The reason for $w$ in the denominator is that, after substitution, the 2-vertex in $\cA_1$ no longer has degree 2, and the factor 2 is because this vertex can be any of the two endpoints of the root edge.

To encode the substitution of edges by objects from $\cA$, we first remark that the number of edges in a quadrangulation is twice the number of faces, hence there is a bijection between faces and pairs of edges.
Moreover, since 2-vertices are isolated, pairs of edges incident with a 2-vertex are uniquely determined.
{In fact, one can interpret $w$ as to mark all pairs of edges incident with 2-vertices, while $z$ marks pairs of remaining edges except the pair that corresponds to the external face in the aforementioned bijection.}
These considerations justify the following change of variables:
$$	
	s= s(z,w) = (1+\widetilde{A})^2,\qquad  \quad
	t =t(z,w) = \frac{w + 2\widetilde{A} + \widetilde{A}^2}{(1+\widetilde{A})^2},
$$
whose meaning is the following.
The term $(1+\widetilde{A})^2$ in $s$ encodes pairs of edges that are possibly substituted.
Pairs of edges incident with a 2-vertex must be treated differently: if any of them is replaced with an object from $\cA$, the vertex no longer has degree 2, hence the term $w + 2\widetilde{A} + \widetilde{A}^2$ in $t$.
This must be corrected with the term $(1+\widetilde{A})^2$ in the denominator of $t$ since those edges where already counted in $s$.

Consider now the system \eqref{eqsQ} from the previous section with the change of variables
$$
	z= zs, \qquad w=t.
$$
We remark that the factor $z$ in $zs$ encodes faces in the initial simple quadrangulation.
We then partition the resulting quadrangulations into three families.
The first one is associated to the generating function $E(z,w)$ which counts those whose initial simple quadrangulation is the single quadrangle.
The second and third are counted by $Q_1(z,w)$ and $Q_0(z,w)$, which respectively count those whose root edge is incident or not with a 2-vertex, and whose initial simple quadrangulations is not the single quadrangle.
In all three generating functions, the variable $z$ marks inner faces and the variable $w$ marks 2-vertices.

\begin{lemma}\label{lemma:part}
	The following equalities hold:
	$$
 		\renewcommand\arraystretch{1.5}
		\begin{array}{lcl}
			Q_1 &=& \ds\frac{1}{t}(N_1(zs,t) + 2N_2(zs,t)), \\
			Q_0 &=& s(2N_0(zs,t) + N_1(zs,t)+ R(zs,t)) +(2\widetilde{A}+\widetilde{A}^2)Q_1, \\
			E &=& z(1+\widetilde{A})^4-4z\widetilde{A}^2  + 4zw\widetilde{A}^2,
		\end{array}
	$$
	where $z$ marks inner faces and $w$ marks 2-vertices.
\end{lemma}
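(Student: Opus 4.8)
The plan is to prove all three identities through the single operation of \emph{inflation}: every arbitrary quadrangulation arises from its underlying simple quadrangulation (the one recovered by collapsing all maximal $2$-cycles) by substituting each edge with an object of $\cA$, and this operation is exactly what the change of variables $z\mapsto zs$, $w\mapsto t$ encodes. I would first record that, for an underlying map that is \emph{not} the single quadrangle, the series $N_0(zs,t)$, $N_1(zs,t)$, $N_2(zs,t)$, $R(zs,t)$ count the inflated quadrangulations organised by the number $i$ of external $2$-vertices of the underlying map (for the $\cN$-classes) or by membership in $\cR$. Then I would partition arbitrary quadrangulations by two features: whether the underlying map is the single quadrangle (giving $E$) and, otherwise, whether the root edge is incident with a $2$-vertex (giving $Q_1$) or not (giving $Q_0$). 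The three equations are the generating-function translations of this partition.

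For $E$ I would argue directly. The single quadrangle is a $4$-cycle, so the maps collapsing back to it are obtained by independently substituting its four edges, which naively gives $z(1+\widetilde A)^4$, the single $z$ recording the unique inner face. Running through the $2^4$ substitution patterns, one checks that the only ones producing an isolated $2$-vertex are the four in which exactly two \emph{adjacent} edges are substituted: the common corner of the two untouched edges then stays of degree $2$ and is isolated, whereas every other pattern leaves no isolated degree-$2$ vertex. Each such pattern contributes $\widetilde A^2$ and must carry the mark $w$, which the naive count omits; correcting these four patterns yields $-4z\widetilde A^2+4zw\widetilde A^2$, hence the formula.

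For $Q_1$ and $Q_0$ the key geometric fact is that a map of $\cN$ has a diagonal incident with the root vertex, so that vertex has degree at least $3$ and the only endpoint of the root edge that can be a $2$-vertex is its head; by the left--right symmetry of the decomposition of~\eqref{eqsQ} the mirror statement holds for the symmetric class whose diagonal is incident with the head. Since the external $2$-vertices are the head and the top vertex, the root edge is incident with a $2$-vertex exactly for all of $\cN_2$ and for the half of $\cN_1$ in which the head is the $2$-vertex; summing over the two diagonal orientations produces the combination $N_1+2N_2$, which after the normalisation discussed below is $tQ_1$. Complementarily, the maps whose root edge is \emph{not} incident with a $2$-vertex come from $\cN_0$ (both orientations, hence $2N_0$), from the other half of $\cN_1$ (hence $N_1$), and from all of $\cR$; reinflating the outer-face edge pair by $s=(1+\widetilde A)^2$ gives the main term $s\,(2N_0+N_1+R)$. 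Finally, starting from a $Q_1$-map and substituting at least one of the two edges at its root $2$-vertex destroys that vertex and moves the map into $Q_0$; as ``at least one of two edges'' contributes $s-1=2\widetilde A+\widetilde A^2$, this produces the remaining term $(2\widetilde A+\widetilde A^2)Q_1$.

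The hard part will be the weight bookkeeping for the distinguished root $2$-vertex, that is, justifying the normalising factor $1/t$ in $Q_1$ and the asymmetric appearance of $s$ versus $s-1$ in the two parts of $Q_0$. The difficulty is that $N_i(zs,t)$ is written in the \emph{inflation} weighting, where each external $2$-vertex carries the full substitution weight $t=(w+2\widetilde A+\widetilde A^2)/(1+\widetilde A)^2$, while $Q_0$ and $Q_1$ are written in the \emph{direct} weighting, where $z$ marks the inner faces and $w$ the $2$-vertices of the inflated map. The two must be reconciled precisely at the root $2$-vertex, whose incident pair of edges coincides with the outer-face pair that is deliberately \emph{not} marked by $z$ in the faces--pairs bijection; this clash is exactly what forces the division by $t$, and I would verify it by tracking a single distinguished $2$-vertex through the substitution and comparing the two weightings term by term.
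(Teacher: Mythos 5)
Your proposal is correct and follows essentially the same route as the paper's proof: the same partition according to whether the underlying simple quadrangulation is the single quadrangle ($E$) or has a $2$-vertex on the root edge ($Q_1$) or not ($Q_0$), the same degree argument identifying the contribution $N_1+2N_2$, the same terms $s(2N_0+N_1+R)$ and $(2\widetilde{A}+\widetilde{A}^2)Q_1$, and the same four-pattern correction $-4z\widetilde{A}^2+4zw\widetilde{A}^2$ for $E$. The normalisation you flag as the hard part is handled in the paper by the same one-line observation you outline, namely that the factor $1/t$ expresses that neither edge at the distinguished root $2$-vertex is replaced (so that this pair plays the role of the unsubstituted external pair), and your proposed term-by-term tracking of that vertex through the substitution is exactly the verification needed.
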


\begin{proof}
Let $uv$ be the root edge.
In all three cases we consider the simple quadrangulation obtained when collapsing all 2-cycles.
The first equation holds because quadrangulations where there is one 2-vertex incident with the root edge
are encoded by $N_1 + 2N_2$, and the factor $\frac{1}{t}$  because we do not replace any of its two incident edges.

In the second equation, we need to distinguish whether either $u$ or $v$ in the initial simple quadrangulation were of degree 2 or not.
If this is the case, we must replace at least one of its two incident edges with a quadrangulation of a 2-cycle,
obtaining the correcting factor $(2\tilde{A} + \tilde{A}^2)$, hence  the term $(2\tilde{A} + \tilde{A}^2)Q_1$.
If neither $u$ nor $v$ were of degree 2 we get $s(2N_0+N_1+R)$, where the factor $s$
{accounts for replacing the pair of edges that corresponds to the external face.}

As for the last equation, vertices of degree 2 in the root face can become isolated after replacing two consecutive edges of a quadrangle by quadrangulations of a 2-cycle.
There are four possibilities for this situation, encoded by $4z\widetilde{A}^2$ in the expansion of $z(1+\widetilde{A})^4$ (see Figure \ref{fig:quad_M0M1}).
Hence the term  $z(1+\widetilde{A})^4-4z\widetilde{A}^2+4zw\widetilde{A}^2$.
\end{proof}

We treat separately the generating function encoding the substitution of $P_3$, the path on 3 vertices, which corresponds to
\begin{equation}\label{eq:A_bar}
	 \widehat{A} = A_0 + A_1 + \frac{A_1}{w}.
\end{equation}
The difference with  $\widetilde{A}$ is that the two edges of $P_3$  have one endpoint of degree 1, and  when  the external 2-vertex of a quadrangulation  in $\mathcal{A}_1$  is identified with one of them,  its degree does not increase (see Figures \ref{fig:rootface} and \ref{fig:quad_M0'_M1}).

\begin{figure}[htb]
\centering
    	\includegraphics[scale=.7]{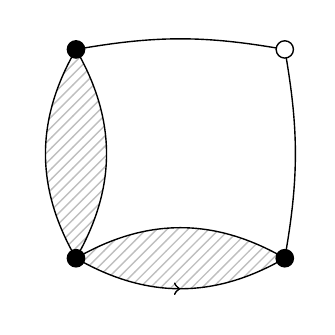}
    	\includegraphics[scale=.7]{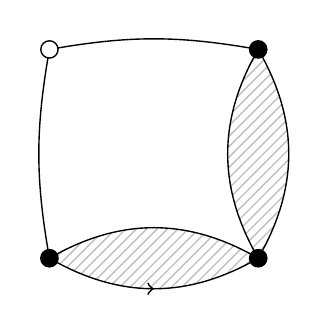}
    	\includegraphics[scale=.7]{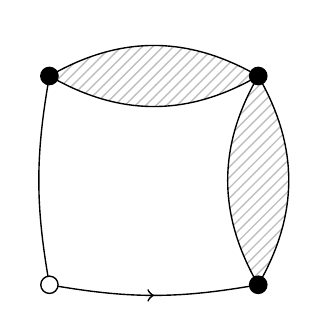}
    	\includegraphics[scale=.7]{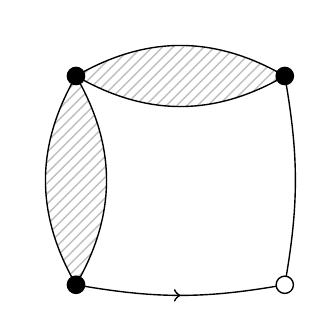}
	\caption{\em The four types of quadrangulations obtained by replacing two consecutive edges of a quadrangle with quadrangulations of a 2-cycle, where a 2-vertex (in white) is created. The  first two on the left are in $\cB_0$, while the two on the right are in $\cB_1$.}
	\label{fig:quad_M0M1}
\end{figure}

\begin{lemma}\label{lem:A}
	Let $Q_0,\,Q_1$ and $E$ be as in  Lemma~\ref{lemma:part}. Then  the following  equations hold and have a unique solution with non-negative coefficients:
	\begin{equation}\label{eqsA0}
		\renewcommand\arraystretch{1.5}
		\begin{array}{llcl}
			A_1 =& zw(1 + \widehat{A}),\\
			A_0 =& 2z\widetilde{A}(1 + \widehat{A}) + z(Q_0 + Q_1 + E + 2z\widetilde{A}(w - 1) + 2z\widetilde{A}^2(1 - w)).
		\end{array}
	\end{equation}
	Moreover, the system has a unique non-zero solution with non-negative coefficients.
\end{lemma}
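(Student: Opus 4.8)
The plan is to decompose an arbitrary quadrangulation of a 2-cycle according to the structure of its root face, and to read off the two generating-function identities directly from this case analysis. The objects being counted live in $\cA = \cA_0 \cup \cA_1$, so I would treat $A_1$ and $A_0$ separately. For $A_1$, recall that a quadrangulation in $\cA_1$ has its root vertex as a 2-vertex; since the 2-vertex is isolated and lies on the 2-cycle boundary, the root face together with the root vertex forces a $P_3$-like configuration incident with the root, and the single internal face attached there contributes the factor $zw$. What remains to be glued on is any substitution of the free edges by objects from $\cA$, where the two edges behave like the edges of $P_3$ (one endpoint of degree 1 in the collapsed picture), so the substitution series is $\widehat A$ rather than $\widetilde A$, as explained in the text around \eqref{eq:A_bar}. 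This yields $A_1 = zw(1+\widehat A)$.

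For $A_0$ I would build the quadrangulation from its simple core (obtained by collapsing all maximal 2-cycles, which by the discussion in Section \ref{sec:arbitrary} may be the degenerate $P_3$ or an honest simple quadrangulation), then re-expand each edge via $\cA$. The first summand $2z\widetilde A(1+\widehat A)$ should account for the cores that collapse to $P_3$: the factor $2z$ encodes the two rootings of $P_3$ (with its middle vertex not a 2-vertex, per the stated convention), one free edge substituted by a general $\widetilde A$-object and the other by a $\widehat A$-object. The remaining term, with the outer factor $z$ marking the root face, collects all quadrangulations whose simple core is a genuine quadrangulation: these are exactly the objects already assembled in Lemma \ref{lemma:part}, namely $Q_0 + Q_1 + E$. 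The extra correction $2z\widetilde A(w-1) + 2z\widetilde A^2(1-w)$ compensates for the cases counted in $E$ (or in the $Q_i$) where a vertex of degree 2 in the root face of the single quadrangle becomes isolated or fails to, so that each configuration is weighted by the correct power of $w$; this is the same phenomenon displayed in Figure \ref{fig:quad_M0M1} and already handled once in the $E$-equation of Lemma \ref{lemma:part}, now re-accounted from the viewpoint of $\cA_0$.

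Once both identities are established, the uniqueness and non-negativity claim follows from Lemma \ref{positive}. Concretely, I would substitute the formulas for $Q_0, Q_1, E$ from Lemma \ref{lemma:part} and the definitions $\widetilde A = A_0 + 2A_1/w$, $\widehat A = A_0 + A_1 + A_1/w$, $s=(1+\widetilde A)^2$, $t = (w+2\widetilde A + \widetilde A^2)/(1+\widetilde A)^2$, together with the system \eqref{eqsQ} (with the change of variables $z \mapsto zs$, $w \mapsto t$), to exhibit \eqref{eqsA0} as a fixed-point system in $A_0, A_1$ and the auxiliary series it invokes. Each right-hand side has non-negative coefficients (the only series that could a priori carry negative signs are the corrections $2z\widetilde A(w-1)$ and $2z\widetilde A^2(1-w)$, but these are absorbed by the positive contributions from $E$, exactly as in the proof of Lemma \ref{lemma:part}) and is divisible by $z$, so Lemma \ref{positive} applies and gives the unique non-zero non-negative solution.

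I expect the main obstacle to be the correction term $2z\widetilde A(w-1) + 2z\widetilde A^2(1-w)$: getting its sign and its weight in $w$ right requires carefully tracking when a degree-2 vertex on the root quadrangle becomes an isolated 2-vertex after edge substitution, avoiding double counting against the four configurations already encoded by $4z\widetilde A^2$ in the $E$-equation. The bookkeeping of $w$-powers across the three conventions in play — $z$ marking internal faces in $\cA$ versus all faces in $\cB$, and the differing roles of $\widetilde A$ and $\widehat A$ — is where an error is most likely to creep in, so I would verify the final system by expanding a few low-order terms and matching them against a direct enumeration of small quadrangulations of a 2-cycle.
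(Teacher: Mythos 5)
Your treatment of the first equation and of the uniqueness claim is sound: the derivation of $A_1 = zw(1+\widehat{A})$ (the root digon plus one internal quadrangle, with a single optional substitution slot whose far endpoint has degree 1, hence $\widehat{A}$) matches the paper's argument up to rewording, and your appeal to Lemma~\ref{positive}, with the negative terms $-2z\widetilde{A}$ and $-2zw\widetilde{A}^2$ absorbed by the positive terms $4z\widetilde{A}$ and $4zw\widetilde{A}^2$ coming from $E$, is correct.

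The gap is in the equation for $A_0$. The paper's proof rests on one structural observation that your plan never makes: a quadrangulation of a 2-cycle is exactly an ordinary quadrangulation with an edge added parallel to its root edge. Your substitute --- ``collapse all maximal 2-cycles of an $\cA_0$-object to get a simple core'' --- is not even well defined, since the whole object lies inside a 2-cycle and would collapse to a single edge. This matters in two places. First, in the $P_3$ term: the root edge of $P_3$ \emph{must} be substituted (otherwise, once the parallel edge is added, the root vertex has degree 2 and the object lies in $\cA_1$, not $\cA_0$), and it must be substituted with series $\widetilde{A}$ precisely because the parallel edge raises the degree at both of its endpoints, while the non-root edge is \emph{optionally} substituted with $\widehat{A}$. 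Your phrasing (``one free edge substituted by a general $\widetilde A$-object and the other by a $\widehat A$-object'') misses both the mandatory/optional distinction and which edge carries which series, and gives no mechanism to recover them. Second, the corrections $2z\widetilde{A}(w-1)+2z\widetilde{A}^2(1-w)$ are not ``the same phenomenon already handled in the $E$-equation'': inside $E$ the weight $w$ records 2-vertices created by substituting two consecutive edges of the quadrangle, whereas here the corrections record what the \emph{added parallel edge} does --- it creates a 2-vertex when exactly one edge adjacent to the root edge is substituted (the two configurations on the left of Figure~\ref{fig:quad_M0'_M1}, weight $w-1$), and it destroys a 2-vertex when that 2-vertex lies on the root edge (two of the four configurations of Figure~\ref{fig:quad_M0M1}, weight $1-w$). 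Your hedge ``(or in the $Q_i$)'' is symptomatic of the missing mechanism: with the parallel-edge viewpoint one sees that $Q_0$ needs no correction (its root edge is incident to no 2-vertex, and no new 2-vertex can be created because a simple quadrangulation other than the single quadrangle has no two adjacent vertices of degree 2), and that $Q_1$ needs none either, precisely because its convention leaves the root-incident 2-vertex unmarked by $w$. Checking low-order coefficients, as you propose, would only confirm the equation; it is not a derivation of it.
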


\begin{proof}
	The right-hand terms are clearly divisible by $z$.
	The right-hand side of the second equation has non-negative coefficients.
	The term $-2z\widetilde{A}$ cancels with the first term, and $-2zw\widetilde{A}$ cancels with a corresponding term in $E$.
	Lemma \ref{positive} then guarantees the uniqueness of the solution with non-negative coefficients.

	We first observe that a quadrangulation of a 2-cycle can be thought as an ordinary quadrangulation adding an edge parallel to the root edge.
	
	When removing the root vertex of a quadrangulation of a 2-cycle in $\cA_1$, we obtain either an edge (term $zw$), a quadrangulation of a 2-cycle in $\cA_0$ (term $zwA_0$), a quadrangulation of a 2-cycle in $\cA_1$ (term $z A_1$), or its symmetric.
	In the last case, we create a 2-vertex, hence the factor $w$ in $zwA_1$.
	The reverse operation consists of starting from  a quadrangulation $\gamma$ encoded in $1+\widehat{A}$, adding a new vertex $v$ in the outer face, connecting $v$ to the root vertex $v'$ of $\gamma$ by two edges, and rooting the resulting map at $vv'$ so that the outer face is a digon.

	The term $2z\widetilde{A}(1 + \widehat{A})$ in the equation for $A_0$ encodes quadrangulations of the 2-cycle arising  from a quadrangulation whose external face is not a simple 4-cycle (illustrated in the two leftmost parts of Figure \ref{fig:rootface}  and on the right of Figure \ref{fig:quad_M0'_M1}).
	The terms $zQ_0+zQ_1+zE$ arise from the corresponding quadrangulations when building the 2-cycle.
	The term $zE$ has to be adjusted, because either we create a 2-vertex (term $2z^2\widetilde{A}(w-1)$, illustrated by the two graphs on the left-hand side of Figure \ref{fig:quad_M0'_M1}) or we remove a 2-vertex (term $2z^2\widetilde{A}^2(1 - w)$, illustrated by the two graphs on the right-hand side of Figure~\ref{fig:quad_M0M1}).
\end{proof}

From the previous lemma we can obtain the generating functions associated to $\cB_0$, $\cB^*_0$ and $\cB_1$.
\begin{lemma}\label{lem:M}
	Let $Q_0,Q_1,E,A_0,A_1$ be as in the previous two lemmas, and $\widehat{A}$ as defined  in Equation \eqref{eq:A_bar}.
	Then  $B_0$, $B_1$ and $B^*_0$ are given by
	\begin{equation}\label{eqsA}
		\renewcommand\arraystretch{1.5}
		\begin{array}{llcl}
	 		B_0 =& 2z(1 + \widehat{A})(1 + \widehat{A} - A_1) + z(Q_0 + E - 2zw\widetilde{A}^2  - 2z\widetilde{A}),\\
			B_1 =& 2z(1 + \widehat{A})A_1 + zw(Q_1 + 2z\widetilde{A}^2),\\
			B^*_0 =& 2z^2\widetilde{A},\\
		\end{array}
	\end{equation}
	where $z$ marks faces and $w$ marks $2$-vertices.
\end{lemma}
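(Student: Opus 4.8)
The plan is to realise every quadrangulation in $\cB$ as an arbitrary quadrangulation obtained from a (possibly degenerate) simple quadrangulation by substituting edges with quadrangulations of a $2$-cycle, and to organise the count according to the three shapes of the root face of Figure~\ref{fig:rootface}. The generating functions $Q_0,Q_1,E$ of Lemma~\ref{lemma:part} already enumerate these objects when the underlying simple quadrangulation is, respectively, not the single quadrangle with root edge not incident to a $2$-vertex, not the single quadrangle with root edge incident to a $2$-vertex, and the single quadrangle. The discrepancies between the $Q$'s and the $B$'s are exactly three: in the $Q$'s the variable $z$ marks inner faces only; the distinguished root $2$-vertex of $Q_1$ has had its weight removed by the factor $1/t$; and the degenerate underlying quadrangulation $P_3$ (which produces the second and third root-face types) is not yet accounted for. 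I would therefore establish the three identities by starting from $Q_0,Q_1,E$, correcting these discrepancies, and adding the $P_3$ contribution.

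First I would treat the case in which the underlying simple quadrangulation is neither the single quadrangle nor $P_3$. Passing from the $Q$'s to the $B$'s then costs a single extra factor $z$ for the now-marked outer face, giving the summand $zQ_0$ in $B_0$; for $B_1$ the factor $1/t$ in the definition of $Q_1$ suppresses the weight of the $2$-vertex incident with the root edge (its incident pair of edges being left unsubstituted), so this weight must be reinstated by a factor $w$, giving $zwQ_1$. Next I would handle the single-quadrangle case encoded by $E$. Here the class $\cB^*_0$, obtained from the single quadrangle by substituting exactly one of the two edges incident with the root edge, must be separated off; a direct count gives $B^*_0=2z^2\widetilde{A}$ (two choices of incident edge, the two faces of the quadrangle marked by $z^2$, and the substituted edge encoded by $\widetilde{A}$ since both its endpoints have degree $\ge 2$), which accounts for the summand $z(-2z\widetilde{A})$ in $B_0$. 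Moreover, among the four configurations of Figure~\ref{fig:quad_M0M1} in which substituting two consecutive edges creates a root-face $2$-vertex, two lie in $\cB_1$ and two in $\cB_0$; all four are weighted by $w$ inside $E$, so I would move the weight of the former, namely $z\cdot 2zw\widetilde{A}^2$, out of $B_0$ and into $B_1$. This yields the remaining terms $z(E-2zw\widetilde{A}^2-2z\widetilde{A})$ in $B_0$ and $zw\cdot 2z\widetilde{A}^2$ in $B_1$.

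Finally I would add the contribution of the degenerate underlying quadrangulation $P_3$, responsible for the second and third root-face types. Writing $P_3$ as $v-v'-v''$ rooted at $vv'$ and substituting each of its two edges by an object encoded in $1+\widehat{A}$ — the use of $\widehat{A}$ rather than $\widetilde{A}$, justified as in Lemma~\ref{lem:A} and Equation~\eqref{eq:A_bar}, reflecting that a $P_3$-edge has a degree-$1$ endpoint — I would leave the non-root edge free and constrain the root edge according to whether it is incident to a $2$-vertex. This gives $2z(1+\widehat{A})A_1$ for $B_1$ and $2z(1+\widehat{A})(1+\widehat{A}-A_1)$ for $B_0$, the factor $2$ coming from the two ways of rooting and the identity $1+\widehat{A}-A_1=1+A_0+A_1/w$ forbidding a $2$-vertex at the root vertex; the sub-term $2z$ obtained by leaving both edges trivial correctly recovers the bare path $P_3$, whose middle vertex is not counted as a $2$-vertex.

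I expect the main obstacle to be precisely this bookkeeping of the $w$-weights across the four regimes (generic, single quadrangle, $\cB^*_0$, and degenerate $P_3$): one must check that every isolated $2$-vertex is marked exactly once, that the distinguished root $2$-vertex removed by the factor $1/t$ in $Q_1$ is reinstated neither too often nor too rarely, and that the middle vertex $v'$ of $P_3$ — by convention not a $2$-vertex in the bare path, yet possibly of degree $2$ after substitution — is treated consistently in both $B_0$ and $B_1$. A secondary point is to confirm, exactly as in the discussion preceding the statement and in the proof of Lemma~\ref{lem:A}, that the subtractions $-2zw\widetilde{A}^2-2z\widetilde{A}$ cancel against positive contributions, so that $B_0$, $B_1$ and $B^*_0$ all have non-negative coefficients.
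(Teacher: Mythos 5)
Your proposal is correct and follows essentially the same route as the paper's own proof: you split according to whether the underlying simple quadrangulation is $P_3$, the single quadrangle, or neither, and you transfer the terms $2z^2\widetilde{A}$ and $2z^2w\widetilde{A}^2$ from $zE$ to $B_0^*$ and $B_1$ respectively, exactly as the paper does. Your more explicit bookkeeping of the $w$-weights (in particular the factor $w$ reinstating the suppressed root $2$-vertex weight in $zwQ_1$, and the identity $1+\widehat{A}-A_1 = 1+A_0+A_1/w$) only spells out what the paper leaves implicit.
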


\begin{proof}
	Recall that an arbitrary quadrangulation is obtained by substituting edges by quadrangulations of a 2-cycle in a simple quadrangulation.
	The factor $z$ in all equations is used to encode the outer face, which has not been considered in the previous generating functions.

	The term  $2z(1 + \widehat{A})(1 + \widehat{A} - A_1)$ encodes quadrangulations obtained from $P_3$.
	Notice that quadrangulations counted by the term $2z(1 + \widehat{A})A_1$ are in $\cB_1$ and must be removed from $\cB_0$.
	The second term $z(Q_0 + E - 2zw\widetilde{A}^2  - 2z\tilde{A})$ encodes quadrangulations obtained from simple quadrangulations whose root face is a 4-cycle.
	In this situation, we have to remove from $zE$ the terms $2z^2\widetilde{A}$ and $2z^2w\widetilde{A}^2$, which contribute to $B_0^{*}$ and $B_1$, respectively (see the two leftmost maps in Figure \ref{fig:quad_M0'_M1} and the two rightmost maps in Figure \ref{fig:quad_M0M1}, respectively).
	Finally, there is an extra contribution to $B_1$ with the term $zwQ_1$.
\end{proof}

From the Systems \eqref{eqsQ} and \eqref{eqsA} we can compute, by iteration, the coefficients to any order of all the series involved.
In particular we can compute the coefficients of the series $B_1$, $B_0$ and $B_0^*$, which are needed in the next section.

\section{Rooted 3-connected 4-regular planar maps}\label{sec:3conn}

In this section we count  3-connected 4-regular planar maps according to the number of simple and  double edges.
Because they have a unique embedding on the oriented sphere, this is equivalent to counting labelled 3-connected 4-regular planar graphs.
We notice that a 3-connected 4-regular map cannot have triple edges, and double edges must be vertex disjoint.
In addition, all maps in this section are rooted.

For brevity, a face of degree 2 not adjacent to another face of degree 2 is called a \emph{2-face}.
We say that an edge is in a 2-face if it is one of its two boundary edges.
An edge is \emph{ordinary} if it is not in  the boundary of a 2-face.
Since the number of edges is even,  the number of ordinary edges is also even.
Maps are counted  according to two parameters: the number of 2-faces, marked by variable $w$, and half the number of ordinary edges, marked by $q$.
Setting $w=q$ one recovers the enumeration of 4-regular maps according to half the number of edges.
Observe that the dual of a quadrangulation with $\ell$ 2-vertices  is a 4-regular map with $\ell$ 2-faces.

We need to  define the replacement of edges {and faces of degree 2} by maps.
Let $M$ be a rooted map.
Consider a fixed orientation of the edges in $M$; since in a rooted map all vertices and edges are distinguishable we can define such an orientation unambiguously.

\emph{Replacement of edges.}
Let $e=uv$ be an edge of $M$ and $N$ a map whose root edge is simple.
The replacement of $e$ with $N$ is the map obtained by the following operation.
Subdivide $e$ twice transforming it in to the path $uu'v'v$, remove the edge $u'v'$, and identify $u'$ and $v'$ with the end vertices of the {(previously deleted)} root edge of $N$, respecting the orientations.
An example is shown in Figure \ref{fig:map_networks}, bottom right.

\emph{Replacement of {faces of degree 2}.}
Let $(e,e')$ be the endpoints of a {face $f$ of degree 2 in $M$}, and $N$ a map whose root {face $f'$ is a 2-face}.
The replacement of {$f$} with $N$ is the map obtained by identifying {$(e,e')$} with the {endpoints of $f'$ after having deleted the two edges of $f'$, while} preserving the orientation and the embedding. As exemplified in Figure \ref{fig:map_networks} (bottom), the face of degree 2 can be a 2-face or not.

Notice that if $M$ and $N$  above are 4-regular, then the maps obtained by replacement of edges {or faces of degree 2} are also 4-regular.

We now consider the following families of (rooted) 4-regular maps.
\begin{itemize}
	\item
		$\cM = \cM_0 \cup \cM^*_0\cup \cM_1 $ are 4-regular maps.
	  	$\cM_0 \cup \cM_0^*$ are 4-regular maps in which the root edge is not  incident with a 2-face, and $\cM_1$ are  those for which the root edge  is incident with exactly one 2-face.
	  	$\cM_0^*$ are  maps in which the root is one of the extreme edges of a triple edge, corresponding to dual maps of quadrangulations on the left of Figure \ref{fig:quad_M0'_M1}.
    	These classes  are in bijection with the classes $\cB_0$, $\cB^*_0$ and $\cB_1 $ from the previous section, and Lemma \ref{lem:M} gives access to the associated generating functions.
\end{itemize}

\noindent
The next classes are all subclasses of $\cM$.
Given a map $M$, we let $M^-$ be the map obtained by removing the root edge $st$.
In accordance with the terminology introduced in the next section, the \emph{poles} are the endpoints $s,t$ of the root edge.

\begin{itemize}
	\item
		$\cL$ are \emph{loop} maps:  the root-edge is a loop.

	\item
		$\cS = \cS_0 \cup \cS_1$ are \emph{series} maps: $M^-$ is connected and there is an edge in $M^-$ that separates the poles.
		As above, the index $i=0,1$ refers to the number of 2-faces incident with the root edge.

\item
		$\cF$ are maps $M$ such that the face to the right of the root-edge is a 2-face, and such that $M - \{s,t\}$ is connected; see Figure \ref{fig:map_networks}.

		\item $\overline{\cF}$ are maps $M$ such that the face to the left of
		the root-edge is a 2-face, and such that $M - \{s,t\}$ is connected. By
		symmetry, $\overline{\cF}$ is in bijection with $\cF$.
		
	\item
		$\cP = \cP_0 \cup \cP_1$ are \emph{parallel} maps:  $M^-$ is connected, there is no edge in $M^-$ separating the poles, and either {$M - \{s,t\}$ is disconnected or $st$ is a double or a triple edge of $M^-$.}
		The index $i=0,1$ has the same meaning as in the {series} class.
		However, the classes  $\cF, \overline{\cF}$ and $\cM_0^*$ are excluded from $\cP$.
	

	\item
		$\cH$ are  $h$-maps: they are obtained from a 3-connected 4-regular map $C$ (the {\em core}) with a root edge which is simple by possibly replacing every non-root ordinary edge of $C$ with a map in $\cM$, {as well as possibly replacing every 2-face (notice that in a 3-connected 4-regular map all faces of degree 2 are 2-faces).}

\end{itemize}
\noindent
We introduce the classes $\mathcal{F}$ and $\overline{\mathcal{F}}$  in order to properly deal with replacements of faces of degree 2. Also, by symmetry, the generating function for $\overline{\mathcal{F}}$ is the same  as for $\mathcal{F}$.

\begin{figure}[bht]
\centering
		\includegraphics[scale=1.3]{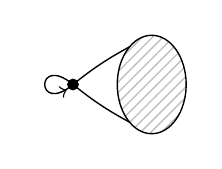}
		\includegraphics[scale=1.3]{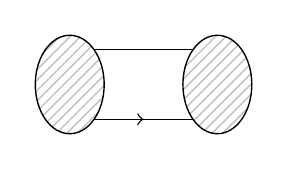}\\
		\includegraphics[scale=1]{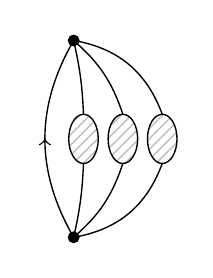}
		\includegraphics[scale=1]{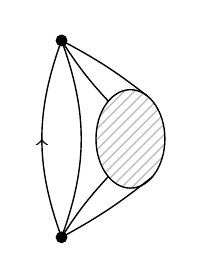}
		\includegraphics[scale=1]{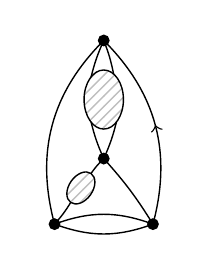}
		\includegraphics[scale=1]{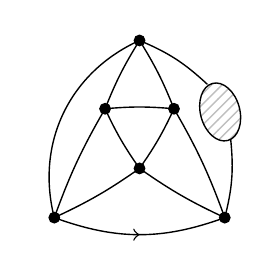}
\caption{Root-decomposition of 4-regular maps. On top a loop map (left) and a series map (right). Bottom, from left to right:  parallel map, map in $\cF$, and two maps in $\cH$.}
\label{fig:map_networks}
\end{figure}

\begin{lemma}\label{lem:partition_D}
	The former classes partition $\cM$ as follows
	$$
	\def\arraystretch{1.8}
	\begin{array}{ll}
	\cM= \cM_0 \cup \cM_1 \cup \cM_0^*, \\
	\cM_0 = \cL \cup \cS_0 \cup \cP_0 \cup \cH, 	\\
	\cM_1 = \cS_1 \cup \cP_1 \cup \cF \cup \overline{\cF}.	\\
	\end{array}
	$$
\end{lemma}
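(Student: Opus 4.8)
The plan is to carry out the classical Tutte-style root decomposition, sorting each map $M\in\cM$ according to the structure at its root edge $st$ once this edge is removed. The first identity $\cM=\cM_0\cup\cM_1\cup\cM_0^*$ is immediate from the definitions, since the three subclasses are cut out by mutually exclusive conditions on the number of 2-faces incident with the root edge together with the triple-edge condition defining $\cM_0^*$. Here it is worth recording the basic observation that the root edge can be incident with \emph{at most} one 2-face: two faces of degree 2 sharing the root edge would be adjacent, so by the definition of a 2-face neither would qualify. Likewise, in the triple-edge configuration of $\cM_0^*$ the two bigons flanking the middle edge are adjacent to each other and hence are \emph{not} 2-faces, which is exactly why $\cM_0^*$ sits among the maps whose root is not incident with a 2-face. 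Thus the real content of the lemma lies in the refined decompositions of $\cM_0$ and $\cM_1$.

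First I would dispose of the loop case: if $st$ is a loop then $M\in\cL$, and since a single loop cannot bound a 2-face we have $\cL\subseteq\cM_0$. Assume now $s\ne t$. The key preliminary fact is that a 4-regular map has no bridge, for were $st$ a bridge, one side of $M^-$ would carry an odd sum of degrees (all vertices of degree 4 except one pole of degree 3), contradicting the handshake lemma. Consequently $M^-$ is connected whenever the root is not a loop, which is precisely the standing hypothesis in the definitions of the series and parallel classes. With $M^-$ connected I would split on whether some edge of $M^-$ separates the poles. If such a separating edge exists, $M$ is a series map and belongs to $\cS_0$ or $\cS_1$ according to the number of incident 2-faces. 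Otherwise I examine $M-\{s,t\}$ and the multiplicity of $st$ in $M^-$: if $M-\{s,t\}$ is disconnected or $st$ is a double or triple edge, then $M$ is a parallel map in $\cP_0$ or $\cP_1$, apart from the configurations deliberately removed from $\cP$, namely those in $\cF$, $\overline{\cF}$ or $\cM_0^*$; while in the last case, $M-\{s,t\}$ connected and $st$ simple in $M^-$, the reduction of the parallel and series parts leaves a 3-connected core, so $M$ is an $h$-map in $\cH$, again barring the $\cF$ and $\overline{\cF}$ configurations. Note that the defining clause ``no edge of $M^-$ separates the poles'' in the parallel class makes $\cS$ and $\cP$ disjoint by construction.

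The assignment of the pieces to $\cM_0$ and $\cM_1$ then follows the 2-face index. The classes $\cL$, $\cS_0$, $\cP_0$ and $\cH$ all have a root edge not incident with a 2-face (the root of an $h$-map being a simple ordinary edge of the core), hence form $\cM_0$, while $\cS_1$, $\cP_1$, $\cF$ and $\overline{\cF}$ all have a root edge incident with exactly one 2-face and hence form $\cM_1$. To finish I must check pairwise disjointness and exhaustiveness within each union. For $\cM_1$ this reduces to showing that, once a 2-face is present on one side of the root edge, exactly one of the four descriptions applies: the side carrying the 2-face distinguishes $\cF$ from $\overline{\cF}$ when $M-\{s,t\}$ is connected, and the series/parallel dichotomy above handles the remaining cases and separates $\cS_1$ from $\cP_1$.

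The main obstacle is the clean carving-out of the special classes $\cF$, $\overline{\cF}$ and $\cM_0^*$ from the parallel and $h$-map families, together with the consistent bookkeeping of 2-faces. Concretely, I must verify that a map whose root edge bounds a 2-face on one side with $M-\{s,t\}$ connected is counted exactly once, in $\cF$ or $\overline{\cF}$ according to the side, and is genuinely excluded from $\cP$ and $\cH$; that the triple-edge-at-the-root configuration is exactly $\cM_0^*$ and is removed from $\cP$; and that the index $i$ separating the ``$0$'' and ``$1$'' subclasses is applied uniformly across $\cS$ and $\cP$, so that $\cM_0$ and $\cM_1$ receive precisely the intended pieces. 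Once these boundary cases are pinned down, the remaining disjointness within $\cM_0$ and the exhaustiveness of both unions are routine, since every map falls into exactly one branch of the loop / series / parallel / $h$-map trichotomy refined by the 2-face index.
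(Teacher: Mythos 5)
Your overall strategy coincides with the paper's: a Tutte-style root decomposition into loop, series, parallel and $h$-classes, with the parallel case split among $\cP$, $\cF$, $\overline{\cF}$ and $\cM_0^*$ (the paper compresses this by invoking Tutte's decomposition of the 2-connected component containing the root into 3-connected components). Several of your supporting observations are correct and even supply details the paper leaves implicit: that the root edge can be incident with at most one 2-face, and that a 4-regular map has no bridge by the parity argument, so $M^-$ is connected whenever the root is not a loop.

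There is, however, a concrete flaw in your loop case. The claim ``a single loop cannot bound a 2-face'' is false. Consider the double-loop map (one vertex carrying two loops), which is precisely the base case of the class $\cL$ in the paper's decomposition: by Euler's formula it has three faces, of degrees $1,1,2$, and the face of degree 2 is bounded by both loops and is adjacent only to faces of degree 1, so it satisfies verbatim the definition of a 2-face given in Section 3. Under that raw definition, the double-loop rooted at a loop would lie in $\cM_1$, contradicting $\cL\subseteq\cM_0$ and hence the stated partition. What actually rescues the statement is a bookkeeping convention, not geometry: the classes $\cM_0,\cM_0^*,\cM_1$ are defined via duality with $\cB_0,\cB_0^*,\cB_1$, and on the quadrangulation side the paper explicitly decrees that the middle vertex of $P_3$ --- whose dual is exactly this degree-2 face of the double-loop --- is \emph{not} a 2-vertex. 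Your proof never invokes this convention, so the step justifying $\cL\subseteq\cM_0$ fails on the one map for which it matters. A secondary weakness is that your closing paragraph lists the boundary-case verifications (that maps whose root bounds a 2-face with $M-\{s,t\}$ connected land in exactly one of $\cF$, $\overline{\cF}$; that the triple-edge configuration is exactly $\cM_0^*$ and is excluded from $\cP$; that the index $i$ is applied uniformly) as obligations still ``to be pinned down'' rather than carrying them out; since these carve-outs are where the real content of the lemma lies, your argument remains an outline at exactly the points where it needs to be a proof.
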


\begin{proof}
Let $G$ be a 4-regular map rooted at $e=st$, and suppose $e$ is  not a loop, so we are not in the class $\cL$. Consider the 2-connected component $C$ containing $e$.
By Tutte's theory of decomposition of 2-connected graphs into 3-connected components (see \cite{dissymetry} for a detailed exposition), $C$ is either a series, parallel or $h$-composition.
Series and $h$-compositions correspond, respectively, to classes $\cS$ and $\cH$.
Parallel compositions are either {in $\cP$, $\cF$ or in $\overline{\cF}$}; we need to distinguish between these three classes when replacing a double edge in a map.
\end{proof}

The next step is to determine the algebraic relations among the generating functions of the previous classes.
{We use variable $q$ to mark half the number of ordinary edges, and $w$ the number of 2-faces.}
{For each class of maps we have the corresponding generating function written with the same letter.}

{When dealing with the equations, we need to be careful when the root edge is in a 2-face.}
{In the case of the generating functions $M_0,\,M_1,\,M_0^{\ast},S_0,\,P_0$ and $H$, we count the total number of both ordinary edges and 2-faces}.
{In $S_1,\, P_1$  we count the number of 2-faces minus 1 (instead of the number of 2-faces), and the number of ordinary edges plus $2$.}
{Finally, in $F$, we count the number of 2-faces minus 1, and the number of ordinary edges}.
{This convention will be more transparent when obtaining equations relating the different generating functions.}
{Also note that, by symmetry, $F=\overline{F}$.}

We {also} introduce an auxiliary generating function $D$.
It is combinatorially equivalent to the generating function associated to the class $\cM$, but with the following modification.
When the root edge of {the map is in $\mathcal{M}_1$}, it is {\it  not} counted by the variable $w$; when it is in $\mathcal{M}_0$, it is counted by the variable $q$. {Finally, when the map is in $\mathcal{M}_0^{\ast}$, it is counted by the variable $w$.}
{The reason for the different treatment  is that $D$ will be used to encode replacements on edges}.
As a preparation for the proof of Lemma \ref{le:maps}, observe  that when replacing and edge by map in $\mathcal{M}_0^{\ast}$, we are creating a new 2-face, hence we use the variable $w$ instead of $q$ to encode the root edge.
{This is also the reason why we need to distinguish the family $\mathcal{M}_0^{\ast}$ from $\mathcal{M}_0$.}
{The definition of  $D$} is a technical device that simplifies the forthcoming equations {(see the proof of Lemma \ref{le:maps})}.
	
Let $T(u,v)$ be the generating function of 3-connected 4-regular maps in which the root edge is simple, where $u$ marks half the number of simple edges and $v$ marks the number of double edges (let us recall again that the number of edges in a 4-regular graph is even).
The next result provides a link between the known series $M_0$, $M_0^*$ and $ M_1$, and the series $T$ we wish to determine.

\begin{lemma}\label{le:maps}
	The following system of equations holds, where $q$ marks half the number of ordinary edges, and $w$ the number of  2-faces.
	\begin{equation}\label{eqsDmaps}
		\renewcommand\arraystretch{1.3}
		\begin{array}{lcl}
     		M_0 &=& S_0 + P_0 + L + H,\\
     		M_1 &=& \ds\frac{w}{q}(S_1 + P_1 + 2qF),\\
     		M^*_0 &=& 2q^2 D, \\
     		D &=& M_0 + \ds\frac{q}{w}M_1 + \frac{w}{q}M^*_0,\\
     		L &=& 2q(1+D-L) + L(w+q),\\
     		S_0 &=& D(D - S_0 - S_1) - \ds\frac{L^2}{2},\\
	 		S_1 &=& \ds\frac{L^2}{2},\\
     		P_0 &=& q^2(1 + D + D^2 + D^3) + 2qDF,\\
	 		P_1 &=& 2q^2D^2,\\
     		H &=& \ds\frac{T\left(q(1+D)^2, w + q(2D + D^2) + F\right)}{1+D}.
		\end{array}
	\end{equation}
\end{lemma}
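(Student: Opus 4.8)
The plan is to establish each of the ten equations in System~\eqref{eqsDmaps} by the standard Tutte-style root-edge decomposition, matching each combinatorial class from Lemma~\ref{lem:partition_D} against its generating function and paying close attention to the bookkeeping conventions for ordinary edges versus $2$-faces. I would proceed equation by equation, in roughly the order listed, since the later equations depend on the auxiliary series $D$ and on the decomposition into $\cL,\cS_i,\cP_i,\cF,\cH$ introduced earlier. The first three equations are essentially definitional: $M_0=S_0+P_0+L+H$ and $M_1=\frac{w}{q}(S_1+P_1+2qF)$ are direct translations of the partition $\cM_0=\cL\cup\cS_0\cup\cP_0\cup\cH$ and $\cM_1=\cS_1\cup\cP_1\cup\cF\cup\overline{\cF}$, where the prefactor $\frac{w}{q}$ and the term $2qF$ implement the convention announced in the text (in $S_1,P_1$ one counts $2$-faces minus $1$ and ordinary edges plus $2$, and $F=\overline{F}$ contributes the factor $2$ after reinserting the missing face and edge weights). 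The equation $M_0^*=2q^2D$ records that a member of $\cM_0^*$ is an extreme edge of a triple edge, i.e.\ two parallel edges ($q^2$, with a factor $2$ for the two rootings) attached to an arbitrary map encoded by $D$. The definition of $D$ then gives $D=M_0+\frac{q}{w}M_1+\frac{w}{q}M_0^*$, simply re-weighting the root edge of each part as prescribed.

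For the structural equations I would argue as follows. The loop equation $L=2q(1+D-L)+L(w+q)$ comes from analyzing a loop at the root vertex: deleting it leaves a configuration encoded by $D$, and the terms $2q(1+D-L)$, $L(w+q)$ record how the loop nests with what lies inside versus outside, subtracting the already-counted $L$ to avoid double counting. The series equations use that a series map has a cut edge in $M^-$ separating the poles, so it decomposes as an ordered sequence of at least two $2$-connected blocks; the split $S_0$ versus $S_1$ by the number of incident $2$-faces yields $S_1=\frac{L^2}{2}$ (the $2$-face arising precisely from two parallel loop-like pieces) and $S_0=D(D-S_0-S_1)-\frac{L^2}{2}$, where the quadratic $D(D-S_0-S_1)$ encodes concatenation with the forbidden-overcount corrections. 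For the parallel equations, $P_0=q^2(1+D+D^2+D^3)+2qDF$ enumerates the ways two, three, or four parallel strands (each a $D$-object) can be bundled at the poles together with the $\cF$-contribution $2qDF$, while $P_1=2q^2D^2$ is the $2$-face-carrying analogue.

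The equation I expect to be the genuine crux is the $h$-map equation
\[
H=\frac{T\bigl(q(1+D)^2,\,w+q(2D+D^2)+F\bigr)}{1+D}.
\]
Here I would invoke the $3$-connected core $C$ guaranteed by Tutte's decomposition and substitute independently into its edges and faces. The substitution argument must show that replacing each non-root ordinary edge of $C$ by a map in $\cM$ corresponds to the change of variable $u\mapsto q(1+D)^2$: each such edge contributes a pair-of-edges weight $q$ multiplied by the $(1+D)^2$ from the two sides available for an $\cM_0$-style replacement via $D$. Simultaneously, replacing a double edge (a $2$-face of $C$) must be shown to produce the weight $v\mapsto w+q(2D+D^2)+F$, where $w$ is the bare $2$-face, $q(2D+D^2)$ records filling one or both of its boundary edges, and $F$ injects the class $\cF$ that was introduced precisely to handle face-replacements correctly. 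The factor $\frac{1}{1+D}$ compensates for the root edge of $C$, which is simple and must not itself be substituted, so its spurious $(1+D)$ weight is divided out. The main obstacle is verifying that these edge- and face-substitutions are \emph{independent} and \emph{exhaustive}, i.e.\ that every $h$-map arises uniquely from a core together with an independent choice of replacement map at each ordinary edge and each $2$-face, with no interaction between the two kinds of replacement and no overcounting of the root; establishing this bijection cleanly, and checking that the $2$-vertex/$2$-face weight conventions track correctly through the duality with the quadrangulation classes $\cB_0,\cB_0^*,\cB_1$, is where the real care is required.
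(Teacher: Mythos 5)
Your overall plan---verifying the ten equations one at a time by root-edge decomposition, and treating the $H$-equation as a substitution into a $3$-connected core---is the same route the paper takes, but two of your justifications have genuine gaps, and they concern precisely the bookkeeping that is the actual content of this lemma. The most serious one is the parallel equation. Reading $q^2(1+D+D^2+D^3)$ as ``the ways two, three, or four parallel strands can be bundled at the poles'' cannot be carried out: in a rooted map the positions of the substituted strands inside the $4$-bond are distinguishable, so naive bundling gives the binomial coefficients of $q^2(1+D)^3=q^2(1+3D+3D^2+D^3)$, not $q^2(1+D+D^2+D^3)$. The correct derivation starts from $q^2(1+D)^3$ for the $4$-bond and then \emph{removes} the configurations that do not lie in $\cP_0$: when exactly one substituted edge is adjacent to the root, the root becomes an extreme edge of a triple edge, and these $2q^2D$ maps are exactly $\cM^*_0$; when exactly two edges are substituted so that a face of degree $2$ incident with the root survives and becomes isolated, one gets the $2q^2D^2$ maps of $\cP_1$. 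This transfer of terms among $P_0$, $P_1$ and $M^*_0$ is the very reason the class $\cM^*_0$ is introduced; your proposal states $M^*_0=2q^2D$ and $P_1=2q^2D^2$ separately but never connects them to the missing binomial terms in $P_0$, so under your reading the stated formula for $P_0$ is unexplained (indeed contradicted).

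The second gap is in the $H$-equation: your interpretation of the substitution $u\mapsto q(1+D)^2$ as ``the two sides available for a replacement'' of each edge is incorrect---an edge of the core is replaced by at most one map in $\cM$, in a fixed way. The square appears because $u$ marks \emph{half} the number of simple edges of the core (just as $q$ marks half the number of ordinary edges), so each unit of $u$ corresponds to a \emph{pair} of core edges, each of which is independently replaced or not, giving $q(1+D)(1+D)$ per pair; your per-edge reading would not be weight-consistent. Two smaller points in the same spirit: the factor $\tfrac12$ in $S_1=L^2/2$ is not automatic from ``two parallel loop-like pieces'' but comes from the fact that there are four ways to connect two loop maps in series and exactly two of them create a $2$-face containing the root; and the loop equation is obtained concretely by taking the double-loop map and possibly replacing its non-root loop by a map in $\cM$ (two embedded configurations), rather than by the vague inside/outside nesting you describe. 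Since the whole lemma is a chain of such weight conventions, these are not presentational slips: carried out literally, your arguments for $P_0$ and $H$ would produce different right-hand sides.
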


\begin{proof} {We deduce each equation separately.}

{\emph{Equation for $M_0,\, M_1$}}. They follow from Lemma \ref{lem:partition_D}.
In  the equation for $M_1$ 
we have to add the factor $w/q$ in order to take care of how we mark the variable in the
$\cS_1$, $\cP_1$ and  $\mathcal{F}$. In particular we get the expression $2wF=\frac{w}{q}(2qF)$.

{\emph{Equation for $M_0^{\ast}$}. The two possibilities are illustrated at  the bottom of Figure \ref{fig:P1_M0*}, and they  lead directly to the claimed expression.}

{\emph{Equation for $D$}}. {The generating function $D$ is associated to the family $\mathcal{M}$.}
{The expression is a direct consequence of  how we mark the root edge between in the three subclasses of $\mathcal{M}$.}

{\emph{Equation for $L$}}. Consider now loop maps. The  double-loop is the map consisting  of a single vertex and two loops.
A loop map is obtained by possibly replacing the non-root loop of the double-loop with a map in $\cM$.
We have two possibilities illustrated in Figure~\ref{fig:loops_S1}, giving
$$
	L = 2q(1+D-L) + L(q+w).
$$

\begin{figure}[tbh]
\centering
    	\includegraphics[scale=1.5]{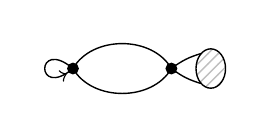}
    	\includegraphics[scale=1.5]{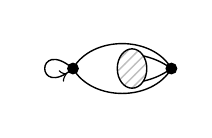}
    	\includegraphics[scale=1.5]{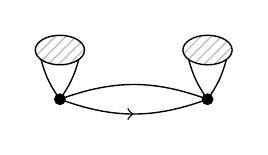}
\caption{\em On the left  the two cases of loop maps. On the right   is a  map in $\mathcal{S}_1$, in which  two loop maps are connected  in series, thus creating a 2-face.}
\label{fig:loops_S1}
\end{figure}
{\emph{Equation for $S_0,\,S_1$}}. A series map is obtained by taking a map in $\cM$ with poles $s_1$ and $t_1$, a map in $\cM \setminus \cS$ with poles $s_2$ and $t_2$, and then replacing  $s_1t_1$ and $s_2t_2$ with edges $t_1s_2$ and $s_1t_2$, the latter being the new root.
When the two maps connected in series are in $\cL$, a 2-face is created containing the root (see Figure \ref{fig:loops_S1}), and we obtain a map in~$\cS_1$.
There are four ways to connect two loop maps in series, but  only two of them produce a map in $\cS_1$:  when they are rooted either both on a face of degree one, or both on a face of degree more than one.
In terms of generating functions  we  have
$$
	S_0 = D(D-S) - S_1 \quad \text{and} \quad S_1 = \frac{L^2}{2}.
$$
{\emph{Equation for $P_0,\,P_1$}}. A parallel map can be obtained in two different ways.
First, from a map either in $\cF$ or  $\overline{\cF}$ with double root edge $r$ and replacing exactly one edge in $r$ with a map in $\cM$.
This is encoded by $2qFD$ (see Figure \ref{fig:P1_M0*}).
Secondly, we take  the 4-bond (an edge of multiplicity 4) and replace any of its three non-root edges with maps in $\cM$.
This is encoded by $q^2(1+D)^3$ (see Figure \ref{fig:map_networks}), but  two particular cases must be considered:

\begin{itemize}
\item[1)] When exactly two edges of the 4-bond are substituted, encoded by $P_1$ (see Figure \ref{fig:P1_M0*}), where a double edge is created;

\item[2)] When exactly one edge of the 4-bond is substituted.
\end{itemize}

Again there are two instances, encoded by  $M_0^*$ (see Figure \ref{fig:P1_M0*}), where the root edge belongs to exactly one of the two faces of degree 2. 	
Notice that these faces are not isolated and hence are encoded as three ordinary edges.
When the root edge is removed,
{the face of degree 2 containing it is removed and the remaining one becomes a 2-face.}
Summing this up gives the expressions for $P_0$ and $P_1$.

\begin{figure}[htb]
\centering
	\includegraphics[scale=1.2]{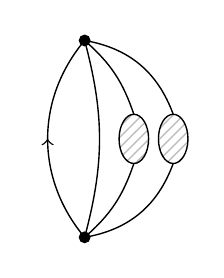}
	\includegraphics[scale=1.2]{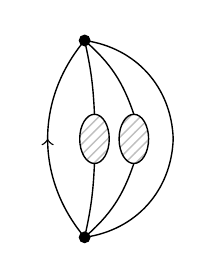}
	\includegraphics[scale=1.2]{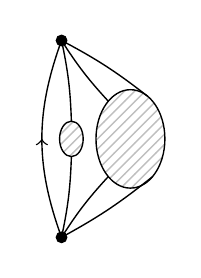}\\
	\includegraphics[scale=1.2]{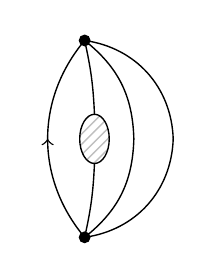}
	\includegraphics[scale=1.2]{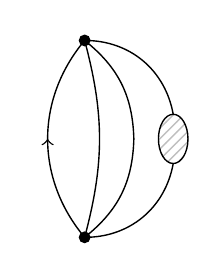}
\caption{\em Top from left to right:  two types of maps in $\cP_1$,
	and a map in $\cP_0$ obtained by taking a map in $\cF$ and substituting its root edge by a map in $\cM$. Bottom:  the two types of maps in $\cM_0^*$.}
\label{fig:P1_M0*}
\end{figure}

{\emph{Equation for $H$}}. A map in $\cH$ is obtained by possibly replacing the non-root simple edges of a core by a map in $\cM$, and the double edges with either:
\begin{itemize}
\item[1)] A map in $\cM$ on one of the edges of the double edge;
\item[2)] Two maps in $\cM$, one on each edge;
\item[3)] A map in $F$ (see Figure \ref{fig:map_networks}).
\end{itemize}

This gives
$$
	H = \frac{T\left( q(1 + D)^2, w + q(2D + D^2) + F \right)}{1 + D}.
$$
\end{proof}

\paragraph{Proof of Theorem \ref{th:3-conn}.}
From the knowledge of $M$ and the $M_i$ in the previous section we can determine~$D=M^*_0/(2q^2)$.
Hence we can also determine $L$, then $S_0$ and $S_1$.
We also know $P_1=2wD^2$ and from $M_1 = S_1 + P_1 + 2wF$ we obtain $F$.
This is enough to compute $P_0$, and from $M_0 = S_0+P_0+L+H$ we determine $H$.
Since $M$ and the $M_i$ were algebraic functions, so are all the functions in the previous system.

We are ready for the final step.
Consider the following change of variables
$$
	u = q(1+D)^2, \qquad v = w + q(2D + D^2) + F,
$$
relating $T$ and $H$.
The first terms in the  expansion of $u$ and $v$ in $q$ and $w$ are
$$
	u = q + \cdots, \qquad  v = w + \cdots
$$
It follows that the Jacobian at $(0,0)$ is equal to 1 and the system can be inverted, in the sense that we can determine uniquely the coefficients of the inverse series.
Computationally, this can be explicitly obtained using Gr\"obner basis (we are grateful to Manuel Kauers for this observation).

Let the inverse of the system be
$$
	q = a(u,v), \qquad  w = b(u,v).
$$
Since $D$ and $F$ are algebraic functions, so are the inverse functions $a$ and $b$.
Now we use the last equation in Lemma \ref{le:maps} to get
$$
	T(u,v) = \left(1+D(a(u,v),b(u,v))\right) H(a(u,v),b(u,v)).
$$
This equation determines $T$.
Since all the series involved are algebraic, so is $T$.

Recall that $t_n$ is the number of labelled 3-connected 4-regular planar graphs.
Let $T_n=[u^k] T(u,0)$ be the number of simple rooted 3-connected 4-regular maps.
Then we have the relation
$$
	8n t_n = n! T_n.
$$
This follows by double counting.
We can label the vertices of a rooted map in $n!$ different ways, since vertices in a rooted map are distinguishable, and on the other hand, from a labelled graph we obtain $8n$ rooted maps: $4n$ choices for the directed root edge, and 2 choices for the root face.
As a consequence,
$$
	8 u\tau'(u) = T(u,0).
$$
Since $T$ is algebraic, so is $\tau'(u)$.
\qed

\section{Labelled 4-regular planar graphs}\label{sec:graphs}

In this section we complete the proof of Theorem \ref{th:main}.
In the sequel all graphs are labelled.
First we define networks.
 A \emph{network} is a connected 4-regular multigraph $G$ with an ordered pair of adjacent vertices $(s, t)$, such that the graph obtained by removing the edge $st$ is simple.
Vertices $s$ and $t$ are the \emph{poles} of the network.

We now define several classes of networks, similar to the classes introduced in the previous section.
We use the same letters, but they now represent classes of labelled \emph{graphs} instead of classes of maps. No confusion should arise since in this section we deal  with graphs.

\begin{itemize}

\item $\cD$ is the class of all networks.

\item $\cL, \cS, \cP$ correspond as before to loop, series and parallel networks. We do not need to distinguish between $\cS_0$ and $\cS_1$ and between $\cP_0$ and $\cP_1$.

\item $\cF$ is the class of networks in which the root edge has multiplicity exactly 2 and removing the poles does not disconnect the graph.

\item $\cS_2$ are networks in $\cF$ such that after removing the two poles there is a cut vertex, see Figure \ref{fig:S2_H2}.

\item $\cH = \cH_1 \cup \cH_2$ are $h$-networks: in $\cH_1$ the root edge is simple and in $\cH_2$ it is double, see Figure \ref{fig:S2_H2}.
\end{itemize}

\begin{figure}
\centering
	\includegraphics[scale=1.3]{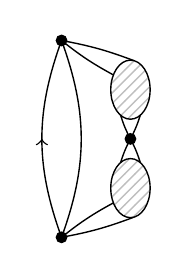} \hspace{2cm}
	\includegraphics[scale=1.3]{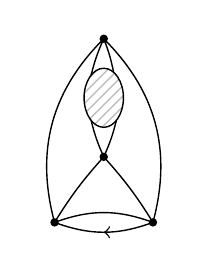}
\caption{\em The two types of networks in $\cF$. On the left  a network in $\cS_2$.
	On the  right  a network in $\cH_2$.}
\label{fig:S2_H2}
\end{figure}

\begin{lemma}\label{lem:partition_F}
	The two classes $\cS_2$ and $\cH_2$ partition $\cF$, that is
	$$
		\cF = \cS_2 \cup \cH_2.
	$$	
\end{lemma}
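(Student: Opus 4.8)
The plan is to show two inclusions together with disjointness. Recall that $\cF$ consists of networks in which the root edge $st$ has multiplicity exactly $2$ and such that removing the poles $s,t$ leaves a connected graph. Writing $G^-$ for the graph obtained by deleting the root edge $st$, membership in $\cF$ means that $G^-$ still contains a parallel copy of $st$ (so $st$ has multiplicity $2$ in $G$), while $G-\{s,t\}$ is connected. The key structural dichotomy I would exploit is whether the graph $G-\{s,t\}$, though connected, has a cut vertex or not. This is exactly the distinction between a network whose core is "degenerate" (a series-type attachment, captured by $\cS_2$) and one whose core is genuinely $3$-connected (captured by $\cH_2$). So the whole proof reduces to a clean case analysis on the connectivity of $G-\{s,t\}$.

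First I would argue that the two classes are disjoint. By definition $\cS_2$ consists of those networks in $\cF$ for which $G-\{s,t\}$ has a cut vertex, whereas $\cH_2 \subseteq \cH$ consists of $h$-networks with double root edge; in an $h$-network the core is a $3$-connected $4$-regular graph, so after removing the two poles and the root edge the remaining graph is $2$-connected and in particular has no cut vertex. Hence a single network cannot lie in both classes, giving $\cS_2 \cap \cH_2 = \varnothing$.

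Next I would establish $\cF \subseteq \cS_2 \cup \cH_2$, which is the substantive direction. Take $G \in \cF$ and consider the $2$-connected component $C$ of $G$ containing the root edge $st$; as in the proof of Lemma 1.7, Tutte's decomposition theory tells us $C$ arises as a series, parallel, or $h$-composition. Since $G \in \cF$ the root edge is double and $G-\{s,t\}$ is connected, which already rules out the parallel compositions placed in $\cP$ (those would disconnect $G-\{s,t\}$ or fail the multiplicity-$2$ condition). The remaining possibilities split according to whether the $3$-connected core is a genuine $3$-connected $4$-regular graph — landing us in $\cH_2$ — or whether the decomposition is a series composition along the parallel edge, in which case a vertex of the attached part becomes a cut vertex of $G-\{s,t\}$, landing us in $\cS_2$. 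The reverse inclusion $\cS_2 \cup \cH_2 \subseteq \cF$ is immediate from the definitions, since both $\cS_2$ and $\cH_2$ were defined as subclasses of $\cF$.

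The main obstacle will be making the middle step fully rigorous: one has to verify that, for a network with double root edge and connected $G-\{s,t\}$, the presence of a cut vertex in $G-\{s,t\}$ corresponds exactly to the series case of Tutte's decomposition and the absence of a cut vertex to the $3$-connected ($h$-composition) case, with no gap or overlap. This requires being careful about how the $4$-regularity and the multiplicity-$2$ root edge interact with the component structure, and in particular checking that the parallel compositions already assigned to $\cP$ (and the excluded configurations $\overline{\cF}$, $\cM_0^*$) do not reappear here. Once that correspondence is pinned down, the partition follows.
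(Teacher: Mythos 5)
Your two inclusions follow essentially the paper's own route (the paper likewise reduces $\cF\subseteq\cS_2\cup\cH_2$ to the Tutte-type argument of Lemma \ref{lem:partition_D}, assigning networks with a cut vertex to $\cS_2$ by definition and the $2$-connected ones to $\cH_2$), so that part of the plan is fine, modulo the small point that $\cH_2$ is \emph{not} ``defined as a subclass of $\cF$'' and one must actually check that an $h$-network with double root edge lies in $\cF$. The genuine gap is your disjointness argument. You infer from $3$-connectivity of the core that removing the two poles of a network in $\cH_2$ leaves a $2$-connected graph; but deleting two vertices from a $3$-connected graph only guarantees a \emph{connected} graph, and here the conclusion is in fact false. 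Take the $4$-regular planar map $C$ with vertices $s,t,y,u_1,u_2,v_1,v_2$, double edges $st,\,u_1u_2,\,v_1v_2$, and simple edges $su_1,sv_1,tu_2,tv_2,yu_1,yu_2,yv_1,yv_2$. It is planar, has no separating pair of vertices, no triple edge, and vertex-disjoint double edges, so it is one of the cores counted by $T(u,v)$ (it contributes to $t_{4,3}$ in Table \ref{tableT}). Root $C$ at the double edge $st$ and replace the two non-root double edges by networks in $\cD$, obtaining a legitimate network $G$ (simple off the root edge), which lies in $\cH_2$ by construction. Yet $G-\{s,t\}$ has the cut vertex $y$: once $s$ and $t$ are removed, the lobe grown on $u_1,u_2$ and the lobe grown on $v_1,v_2$ meet only at $y$. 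Hence $G$ also satisfies the verbatim definition of $\cS_2$, so $\cS_2\cap\cH_2\neq\emptyset$ under the definitions you are using, and no argument can close this gap.

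The same example shows that the exact correspondence you postpone to the ``middle step'' (cut vertex in $G-\{s,t\}$ if and only if series case, with ``no gap or overlap'') is not available: an $h$-composition can perfectly well create a cut vertex in $G-\{s,t\}$. The paper's proof sidesteps this by being one-directional: if $G-\{s,t\}$ has a cut vertex then $G\in\cS_2$ holds by definition, and if it is $2$-connected then the Tutte argument places $G$ in $\cH_2$; this proves exactly the displayed union and claims no converse. Genuine disjointness --- which is what the identity $F=S_2+H_2$ in Lemma \ref{lem:graphs} really uses --- holds only for the finer, structural reading of $\cS_2$ that matches its equation: the relevant cut vertex must separate the two poles in $G$ minus the root double edge, i.e.\ each side of it is attached to only one pole. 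Under that reading an $h$-network admits no such pole-separating cut vertex (a short argument from $3$-connectivity of the core) and your disjointness strategy can be repaired; under the literal cut-vertex definition it cannot.
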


\begin{proof}
	Clearly  $\cS_2$ and $\cH_2$ are contained in $\cF$.
Now take a network  in $\cF$ and remove its two poles.
By definition, the resulting graph must be connected and has either a cut-vertex or is at least 2-connected.
If it has a cut-vertex then it belongs to $\cS_2$.
Otherwise, by the same argument as in Lemma \ref{lem:partition_D} it is obtained from a 3-connected core rooted at a double edge, hence it belongs to $\cH_2$.
\end{proof}

The  generating functions of networks are of the exponential type, for instance $D(x) = \sum D_n \frac{x^n}{n!}$.
We need in addition the generating functions
$$
	T_i(x,u,v) = \sum T^{(i)}_{n,k,\ell} u^k v^\ell \frac{x^n}{n!}
$$
of 3-connected 4-regular planar graphs rooted at a directed edge, where $i=1,2$ indicates the multiplicity of the root, $x$ marks vertices and $u,v$ mark, respectively, half the number of simple and the number of double edges.
The coefficients $T^{(i)}_{n,k,\ell}$ of  $T_i$ are easily obtained from those of $T=\sum t_{k,\ell} u^k v^\ell$, the generating function of 3-connected 4-regular maps computed in the previous section.
By double counting we have
$$
	T^{(1)}_{n,k,\ell} = n! \frac{t_{k,\ell}}{2},
	\qquad
	\ell \,T^{(2)}_{n,k,\ell} =k\,  T^{(1)}_{n,k,\ell}, \qquad n=\ell + k/2.
$$
Since we can compute the coefficients $t_{k,\ell}$ as in the previous section, we can compute the coefficients $T^{(i)}_{n,k,\ell}$ as well.

In terms of generating functions this amounts to
\begin{equation}\label{eq:T1_T2}
	T^{(1)}(x,u,v) =\frac{1}{2} T(u^2x,vx),
	\qquad
	u\frac{\partial}{\partial u}T^{(2)}(x,u,v) = v	\frac{\partial}{\partial u}T^{(1)}(x,u,v).
\end{equation}
Since $T$ is an algebraic function, $T^{(1)}$ is algebraic too, but to prove that $T^{(2)}$ is algebraic needs a separate argument.

Similarly to Lemma \ref{lem:partition_F} but in the maps setting, $\cF$ (as a class of maps) is partitioned into  $\cS_2$ and $\cH_2$.
Although redundant for the purpose of extracting the coefficients of $T$, this decomposition can be added to the system of equations \eqref{eqsDmaps} as follows:
\begin{equation}\label{eq:algebraicity_T2}
	\renewcommand\arraystretch{1.65}
	\begin{array}{lcl}
		F &=& S_2 + H_2,\\
		S_2 &=& (w + q(2D + D^2) + F)(w + q(2D + D^2) + F - S_2),\\
		H_2 &=& \ds\frac{T_2\left( q(1+D)^2, w + q(2D + D^2) + F \right)}{w + q(2D + D^2) + F}
	\end{array}
\end{equation}
where $T_2(u,v)$ counts 3-connected 4-regular \emph{maps} rooted at a double edge and with the face of degree two on the right of the root edge.
Since  $F$ and $D$ are algebraic by Lemma \ref{le:maps}, so are  $S_2$ and $H_2$.
And so is  $T_2$ since it can be derived form $H_2$ by the same algebraic inversion as before.
Finally we have
\begin{equation}
	T^{(2)}(x,u,v) = \frac{1}{2} T_2(u^2x,vx),
\end{equation}
where the division by two encodes the choice of the root face.

\begin{lemma}\label{lem:graphs}
	The following system of equations among the previous series holds:
	\begin{equation}\label{eq:Dgraphs}
 		\renewcommand\arraystretch{1.6}
		\begin{array}{lcl}
			D &=& L + S + P +H_1 + F\\
			L &=& \ds\frac{x}{2} (D-L) \\
			S &=& D(D-S)\\
			P &=& x^2\left(\frac{D^2}{2} + \frac{D^3}{6} \right) + FD \\
			F &=& S_2 + H_2 \\
			S_2 &=& \frac{1}{x}\left(F + x^2\left(D + \frac{D^2}{2}\right)\right)\left(F + x^2\left(D + \frac{D^2}{2}\right) - S_2\right) \\
			H_1 &=& \ds\frac{T^{(1)}\left(x,1+D, D+ \frac{D^2}{2} + \frac{F}{x^2}\right)}{1+D} \\
			H_2 &=& \ds\frac{T^{(2)}\left(x,1+D,D + \frac{D^2}{2} + \frac{F}{x^2}\right)}{D + \frac{D^2}{2} + \frac{F}{x^2}}
		\end{array}
\end{equation}
\end{lemma}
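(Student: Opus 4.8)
The plan is to prove Lemma~\ref{lem:graphs} as the labelled-graph counterpart of the map statement in Lemma~\ref{le:maps}, systematically replacing ordinary generating functions of rooted maps by exponential generating functions of labelled networks in the vertex variable $x$. The structural backbone is identical: Tutte's decomposition of a $2$-connected graph into its $3$-connected components, combined with Whitney's theorem, which lets us regard a $3$-connected core as a map and therefore feed it into the already-determined series $T$ and $T_2$ through their graph specialisations $T^{(1)},T^{(2)}$ (recall \eqref{eq:T1_T2} and \eqref{eq:algebraicity_T2}). First I would prove the partition $\cD=\cL\cup\cS\cup\cP\cup\cH_1\cup\cF$ underlying the first equation of \eqref{eq:Dgraphs}: given a network, delete the root edge $st$ and inspect the $2$-connected component carrying it; exactly as in Lemma~\ref{lem:partition_D} this component is a loop, series, parallel or $h$-composition, while the case in which $st$ has multiplicity $2$ and the poles do not separate is split off as $\cF$ and partitioned further by Lemma~\ref{lem:partition_F}. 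Because we now count graphs rather than maps and no longer record an embedding, the refined classes $\cS_0,\cS_1$ and $\cP_0,\cP_1$ of Section~\ref{sec:3conn} collapse, so the graph system is shorter than \eqref{eqsDmaps}.

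Next I would establish the structural equations one at a time, the one genuinely new feature being the exponential bookkeeping of vertices and symmetries. For $\cL$, a loop network is built from a non-loop network by adding one new labelled vertex that carries the root loop and joining its two free half-edges to the two poles; the factor $x$ marks the new vertex and the factor $\tfrac12$ records the symmetry interchanging the two half-edges of the loop, giving $L=\tfrac{x}{2}(D-L)$. For $\cS$ the standard sequence decomposition (head a non-series network, tail arbitrary, with no vertex identification) gives $S=D(D-S)$. For $\cP$ I would assemble a parallel network from the $4$-bond on two fresh labelled poles (hence $x^2$) and replace its non-root strands by networks: the condition that $G-st$ be simple forces at most one non-root plain strand to survive, so exactly two or three strands become networks, contributing $\tfrac{D^2}{2}$ and $\tfrac{D^3}{6}$ after dividing by the symmetry of the unordered parallel parts, while the remaining term $FD$ arises from an $\cF$-network with one of its two root edges replaced by a network. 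The identity $F=S_2+H_2$ is Lemma~\ref{lem:partition_F}, and $\cS_2$ is treated by a chain decomposition of the blocks of $\cF$ glued along the cut vertex produced after deleting the poles, the factor $\tfrac1x$ correcting for the identified vertex and the combination $F+x^2(D+\tfrac{D^2}{2})$ being precisely $x^2$ times the double-edge substitution appearing below.

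The equations for $H_1$ and $H_2$ are the crux and mirror the $H$-equation of Lemma~\ref{le:maps}. An $h$-network is a $3$-connected core with a distinguished root edge, simple in $\cH_1$ and double in $\cH_2$, in which every non-root ordinary edge is possibly replaced by a network (the substitution $u\mapsto 1+D$) and every virtual double edge is replaced by an admissible parallel gadget, namely a network on a single branch, an unordered pair of networks, or an $\cF$-network (the substitution $v\mapsto D+\tfrac{D^2}{2}+\tfrac{F}{x^2}$, with $\tfrac{F}{x^2}$ normalised so as not to recount the poles of the core). The core itself is enumerated by $T^{(1)}$ or $T^{(2)}$, which are algebraic specialisations of the map series $T$ and $T_2$ of Section~\ref{sec:3conn}. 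To finish one divides by the factor attached to the root edge, which must not itself be replaced: $1+D$ for the simple root of $H_1$ and the double-edge expression $D+\tfrac{D^2}{2}+\tfrac{F}{x^2}$ for the double root of $H_2$.

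I expect the main obstacle to be fixing every symmetry and normalisation factor correctly in the exponential setting: the $\tfrac12$ and $\tfrac{1}{3!}$ from unordered loop half-edges and unordered parallel parts, the $\tfrac1x$ in the equation for $S_2$, and above all the precise form of the double-edge substitution $D+\tfrac{D^2}{2}+\tfrac{F}{x^2}$. In particular one must justify that in Tutte's decomposition the double edges of a $3$-connected core are virtual and are always genuinely replaced, so that no bare term appears, in contrast with the map weight $w+q(2D+D^2)+F$ of \eqref{eqsDmaps}; the powers $x^{\pm2}$ then encode the two poles created, respectively absorbed, in passing between the map and graph normalisations via \eqref{eq:T1_T2}. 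Once these factors are checked against the corresponding terms of \eqref{eqsDmaps} and against the relations \eqref{eq:T1_T2}, \eqref{eq:algebraicity_T2}, the remaining verifications are routine.
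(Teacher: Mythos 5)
Your proposal is correct and follows essentially the same route as the paper's own proof: both adapt the partition of Lemma~\ref{lem:partition_D} and the map equations of Lemma~\ref{le:maps} to labelled networks, with the same exponential symmetry factors (the $\tfrac{x}{2}$ for loops, the unordered parallel parts $\tfrac{D^2}{2}$, $\tfrac{D^3}{6}$, the $\tfrac1x$ vertex identification in $S_2$) and the same double-edge substitution $D+\tfrac{D^2}{2}+\tfrac{F}{x^2}$ with division by the root factor in $H_1$ and $H_2$. The only point treated more explicitly in the paper is the $S_2$ equation, where the pieces $x^2\bigl(D+\tfrac{D^2}{2}\bigr)$ (the paper's ``fat polygon'' segments) deliberately exclude $\cF$-replacements to avoid double counting, a subtlety your chain decomposition should state but which does not change the argument.
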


\begin{proof}
The first equation follows from a direct adaptation of Lemma~\ref{lem:partition_D} to the context of graphs.
The remaining equations follow by adapting the proof of Lemma~\ref{le:maps} from maps to graphs. We  briefly indicate the differences.

In the equation for $L$ we must take  into account that graphs are not embedded, hence the division by two, and also that the double loop is not admissible as a network.
In the equation for $S$ the only difference with  Lemma \ref{le:maps} is that the family $\cS_1$ is no longer needed.
Similar considerations apply to the equation for $P$. The equation for $F$ follows directly  from Lemma \ref{lem:partition_F}.

The  equation for $\cS_2$ describes the decomposition of a network in $\cS_2$.
It is essentially a series composition of two networks, one in $\cF$ and one in $\cF \setminus \cS_2$, in which the second pole of the first one is identified with the first pole of the second one, hence the division by $x$.
In addition we have  the cases were one of the two networks in the series composition is a {\em fat polygon}, that is, a cycle in which each edge is doubled, and every double edge is replaced with one or two networks in $\cD$.
Notice  that contrary to double edges of networks counted in $T^{(1)}$ and $T^{(2)}$, double edges of the fat polygon are not replaced with networks in $\cF$, as this would create a series composition with two networks in $\cS_2$.

The last two equations are similar to that for $H$ in Lemma \ref{le:maps} with the difference that double edges must be replaced with either: a network in $\cD$;  two networks in $\cD$, encoded by $D^2/2$; or a network in $\cF$ for which the two poles were removed, encoded by $F/x^2$.
\end{proof}

\paragraph{Proof of Theorem \ref{th:main}.}
Equations (\ref{eq:Dgraphs}) can be rewritten as a system with non-negative coefficients using the identities  $D-L=S+P+H_1+F$, $D-S=L+P+H_1+F$, and $F-S_2=H_2$.
Hence it has a unique solution with non-negative terms, which can be computed by iteration from the knowledge of $T^{(1)}$ and $T^{(2)}$.
Since all the functions involved are algebraic, the solution consists of algebraic functions.

Let $C(x)$ now be the generating function of labelled 4-regular planar graphs.
There is a simple relation between $C(x)$ and the series $D(x)$ of networks, namely
$$
	4xC'(x) = D(x) - L(x) - L(x)^2 - F(x)  - \frac{x^2}{2} D(x)^2.
$$
The series on the left corresponds to labelled graphs with a distinguished  vertex $v$ in which one of the 4 edges incident with $v$ is selected.
These correspond precisely to networks, except for the fact that since we are counting simple graphs we have to remove from $D(x)$ networks containing either loops or double edges, which correspond to the terms subtracted.

Finally, since $D,L$ and $F$ are algebraic functions, so is $C'(x)$.
\qed

\paragraph{Proof of Corollary \ref{coro}.}
A series is $D$-finite if it satisfies a linear differential equation with polynomial coefficients.
It is well-known (see Chapter 6 in \cite{stanley2}) that $\sum f_nx^n/n!$ is $D$-finite if and only if $\{f_n\}$ is $P$-recursive.
Since $G(x) = \exp(C(x))$ and $C'(x)$ is algebraic, it is enough to prove the following:

\begin{center}
	If $C'(x)$ is algebraic then $\exp(C(x))$ is $D$-finite.
\end{center}

\noindent
Let $G(x) = e^{C(x)}$.
It is easily proved by induction that  each derivative $C^{(i)}(x)$  is a rational function of $C'(x)$ and $x$.
Using this observation and applying induction we have that $G^{(i)} = R_i(C',x) G$, where  $R_i$ is a rational function in $C'$ and $x$, and we set $R_0=1$.
Since $C'$ is algebraic, $\mathbb{Q}(C', x)$ is finite dimensional over $\mathbb{Q}(x)$, say of dimension~$k$.
Hence there are rational functions $S_i(x)$ such that $\sum_{i=0}^k  S_i(x) R_i(C',x) =0$.
It follows that
$$
	S_0(x)G + S_1(x) G' + \cdots + S_k(x) G^{(k)} = 0.
$$
proving that $G$ is $D$-finite.

\section{Simple 4-regular maps}\label{sec:simple-maps}

The enumeration of \emph{simple} 4-regular maps is obtained by adapting the arguments in the previous section to maps instead of graphs.
We define the various classes of simple maps exactly as we did for networks, keeping the same notation.
The decomposition scheme starts from the series $T_1(x,u,v)$ and $T_2(x,u,v)$ of 3-connected 4-regular maps, where the indices and variables have the same meaning as in the previous section, with the exception that now the rooted (double) edge of $T_2$ does not need to have a face of degree two on its right hand-side.
As before, they are accessible from the series $T(u,v)$:
\begin{equation*}
	T_1(x,u,v) = T(xu^2,xv) \qquad \text{and} \qquad
	u\frac{\partial}{\partial u} T_2(x,u,v) = 2v\frac{\partial}{\partial v}T_1(x,u,v),
\end{equation*}
where  multiplication by 2 on the right-hand side of the second equation is because a double edge can be rooted at any of its two edges (as discussed above).

\begin{lemma}\label{lem:simple_maps}
	The following equations hold:
	\begin{equation}\label{eq:simple_maps}
		\renewcommand\arraystretch{1.8}
		\begin{array}{lcl}
			D &=& L + S + P + H_1 + 2F\\
			L &=& 2x(D - L) \\
			S &=& D(D - S)\\
			P &=& x^2(3D^2 + D^3) + 2FD \\
			F &=& S_2 + \ds\frac{H_2}{2}\\
			S_2 &=& \ds\frac{1}{x}(F + x^2(2D + D^2))(F + x^2(2D + D^2) - S_2)\\
			H_1 &=& \ds\frac{T_1\left(x, 1+D, 2D + D^2 + \frac{F}{x^2}\right)}{1 + D} \\
			H_2 &=& \ds\frac{T_2\left(x, 1+D, 2D + D^2 + \frac{F}{x^2}\right)}{2D + D^2 + \frac{F}{x^2}}
		\end{array}
	\end{equation}
\end{lemma}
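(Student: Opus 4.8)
The plan is to prove Lemma~\ref{lem:simple_maps} by the same strategy used for Lemma~\ref{lem:graphs}, namely by transporting each equation from the graph setting to the map setting and carefully tracking the two differences that distinguish maps from labelled graphs: \emph{maps are embedded} (so there is no division by symmetry factors coming from automorphisms, but there are extra factors encoding the choice of an embedding/side), and \emph{the root face may lie on either side of a doubled or looped root edge} (so objects that were counted once as graphs now split according to which face sits to the right of the root). Concretely, I would first set up the dictionary: the classes $\cL,\cS,\cP,\cF,\cS_2,\cH_1,\cH_2$ are defined for simple maps exactly as the corresponding network classes, and the series $T_1,T_2$ are the map analogues of $T^{(1)},T^{(2)}$, accessible from $T(u,v)$ through the stated substitutions $T_1(x,u,v)=T(xu^2,xv)$ and $u\,\partial_u T_2 = 2v\,\partial_v T_1$.

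I would then derive the eight equations one at a time, highlighting only where the map version differs from Equation~\eqref{eq:Dgraphs}. For the first equation, the partition $\cD = \cL\cup\cS\cup\cP\cup\cH_1$ together with the contribution of $\cF$ now carries a factor $2$ in front of $F$ because in the map setting the 2-face bounded by the root edge can be placed on either side of it, giving the two symmetric families $\cF$ and $\overline{\cF}$ exactly as in Lemma~\ref{le:maps}. For $L$, the map version keeps the embedding information, so instead of the graph factor $x/2$ one gets $2x$, reflecting that the non-root loop can be attached in two embedded ways and that no automorphism-halving occurs. The equation $S = D(D-S)$ is structurally identical, since a series composition of two embedded maps is rigid. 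For $P$, the term $x^2(3D^2+D^3)$ replaces the graph term $x^2(D^2/2 + D^3/6)$ because the multiplicities of the bond components are no longer divided by the symmetries of the multiset of replaced edges: the $4$-bond has three non-root edges, and the embedded count of replacing some subset of them produces the coefficients $3$ and $1$ rather than $1/2$ and $1/6$; the $2FD$ term (instead of $FD$) again carries the factor $2$ from the side-choice for the $\cF$-component.

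The core inversion equations for $H_1$ and $H_2$ follow by replacing every non-root ordinary edge of a simple $3$-connected core by a map in $\cM$ and every double edge as in Lemma~\ref{le:maps}, which gives the substitution $u = 1+D$, $v = 2D+D^2+F/x^2$ (here $x$ marks vertices rather than half-edges, so the face-counting parameter $q$ of Lemma~\ref{le:maps} is absorbed into $x$ and the $F$-term is divided by $x^2$ to account for the two poles that were removed). The denominators $1+D$ for $H_1$ and $2D+D^2+F/x^2$ for $H_2$ are the usual root-edge corrections: one divides by the series counting the possible replacements of the root edge itself, simple in the $H_1$ case and double in the $H_2$ case. The equations for $F$ and $S_2$ come from the map analogue of Lemma~\ref{lem:partition_F}, with $F = S_2 + H_2/2$ where the factor $1/2$ on $H_2$ compensates for the choice of root face already discussed, and $S_2$ is a series-type composition of two $\cF$-objects with the division by $x$ identifying a pole. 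I expect the main obstacle to be bookkeeping the side-choice and root-face factors consistently across all the equations—in particular making sure that the factor $2$ attached to $F$ in the $D$-equation, the factor $2$ in $2FD$, and the factor $1/2$ in $F = S_2 + H_2/2$ are mutually compatible and do not double-count, and that the face-counting convention of Lemma~\ref{le:maps} (the asymmetric marking of $S_1,P_1,F$ by $w/q$) is correctly collapsed when passing to the vertex-marking variable $x$. Once the equations are justified, algebraicity of every series follows exactly as in the previous section, since $T_1,T_2$ are algebraic and the system can be solved iteratively.
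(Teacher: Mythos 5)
Your proposal is correct and takes essentially the same route as the paper: the paper's own (four-sentence) proof likewise derives the system by adapting Lemma~\ref{lem:graphs} to maps, attributing the terms $3D^2$ and $2D$ to the fact that there are $\binom{m}{k}$ embedded ways to substitute $k$ maps into an edge of multiplicity $m$, and the factors of $2$ (in the equations for $D$, $L$, $P$) together with the $H_2/2$ to the presence of a root face --- exactly the two corrections you track. Your equation-by-equation accounting is more detailed than the paper's, and apart from a harmless notational slip (writing $\cM$ where the section's class $\cD$ of simple maps is meant), it matches the intended argument.
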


\begin{proof}
The proof  is essentially the same as that of Lemma \ref{lem:graphs}, with the following differences.
Maps are embedded, hence there are $\binom{m}{k}$ ways to substitute $k$ maps in $\cD$ for an edge of multiplicity  $m$.
This  justifies the term $3D^2$ in the fourth equation and the term $2D$ in the last three equations.
Finally, the fact that maps have a root face explains the factors 2 in the first, second and fourth equations.
\end{proof}

\paragraph{Proof of Theorem \ref{th:maps}.}
From the knowledge of $T(u,v)$ of Section \ref{sec:3conn} and from Lemma \ref{lem:simple_maps}, we can compute the coefficients of the series $D$, $L$, $S$, $P$, $S_2$, $F$, $H_1$ and $H_2$ up to any order.
By removing maps having a loop or a multiple edge, the series $M(x)$ of rooted 4-regular simple maps is equal to
\begin{equation}
	M(x) = D(x) - L(x) - L(x)^2 - 3x^2D(x)^2 - 2F(x).
\end{equation}
Since $D,L$ and $F$ are algebraic, so is $M$. \qed


\section*{Acknowledgements}
We are grateful to Manuel Kauers and Bruno Salvy for useful suggestions concerning computational aspects of our work, and to Mireille Bousquet-M\'elou for suggesting the proof of Corollary \ref{coro}. 
We thank an anonymous referee for a careful reading of the manuscript and suggestions that helped to improve the presentation.
Part of the work presented in this paper was carried out during a visit of the second author at the Department of Mathematics of the Universitat Polit\`ecnica de Catalunya, Spain, and he would also like to thank this institution for welcoming him.

\newpage
\section*{Tables}

As an illustration we present in Table \ref{taula}  the numbers of 4-regular planar graphs up to 24 vertices.
We see that the first discrepancy is at $n=12$.
For this number of vertices there are 4-regular planar graphs which are either disconnected (the union of two octahedra) or are connected but not 3-connected (gluing two octahedra via two parallel edges).
Table \ref{tableT} and \ref{tab:simple_maps} give, respectively, the numbers of rooted 3-connected 4-regular maps, and simple 4-regular maps.

\begin{table}[htb]
\centering
\tiny
	\begin{tabular}{crrr}
	\toprule
		$n$ & $g_n$ & $c_n$ & $t_n$\\
	\midrule
		6 & 15 & 15 & 15 \\
		7 & 0 & 0 & 0 \\
		8 & 2520 & 2520 & 2520\\
		9 & 30240 & 30240 & 30240 \\
		10 & 1315440 & 1315440& 1315440\\
		11 & 39916800 & 39916800& 39916800 \\
		12 & 1606755150 & 1606651200 & 1546776000 \\
		13 & 66356690400 & 66356690400 & 63826963200 \\
		14 & 3068088823800 & 3067975310400 & 2879997120000 \\
		15 & 152398096250400 & 152395825982400 & 142057025510400\\
		16 & 8196374895508800 & 8196176020032000 & 7534165871232000\\
		17 & 472595587079616000 & 472586324386176000 & 430559631710208000\\
		18 & 29138462100216869400 & 29137847418231552000 & 26287924131076608000\\
		19 & 1912269800864459836800 & 1912231517504083776000 & 1710786280874711040000\\
		20 & 133143916957026288112800 & 133141260589657512192000 & 118162522829227548672000\\
		21 & 9803331490189678577136000 & 9803140616698955285760000 & 8635690901034837319680000\\
		22 & 761176404797020723326816000 & 761161832514030029322240000 & 665819208405772061921280000\\
		23 & 62162810722904469623293248000 & 62161644432203364801392640000 & 54014719048912416098304000000\\
		24 & 5327113727746428410913561441000 & 5327015666189741660374318080000 & 4599666299608288403199344640000\\
	\bottomrule
	\end{tabular}
	\caption{Numbers of arbitrary, connected and 3-connected labelled 4-regular planar graphs with $n$ vertices.}\label{taula}
\end{table}

\begin{table}[h]
\centering
\tiny
	\begin{tabular}{r|rrrrrrrrrrrrrrrrrrrrrrrr}
	\toprule
	$\ell \backslash k$ & 2 & 3 & 4 & 5 & 6 & 7 & 8 & 9 & 10 & 11 & 12 \\
	\midrule
	0 &&&&& 1 && 4 & 6 & 29 & 88 & 310\\
	1 &&&&& 12 & 28 & 128 & 396 & 1460 & 5148 & 18696\\
	2 & 2 & 6 & 16 & 40 & 156 & 546 & 2192 & 8316 & 32380 & 125510 & 489708\\
	3 && 8 & 56 & 260 & 1152 & 4900 & 21344 & 92160 & 397960 & 1708300 & 7303040\\
	4 &&& 46 & 510 & 3630 & 21350 & 115440 & 593622 & 2959160 & 14407250 & 68862960\\
	5 &&&& 312 & 4920 & 46508 & 347984 & 2282544 & 13791064 & 78760836 & 431601120\\
	6 &&&&& 2388 & 48860 & 579736 & 5267640 & 40819100 & 284712736 & 1843137520\\
	7 &&&&&& 19728 & 498352 & 7123464 & 76274560 & 683057672 & 5415222384\\
	8 &&&&&&& 172374 & 5190462 & 86891050 & 1072179834 & 10906813890\\
	9 &&&&&&&& 1571096 & 54988280 & 1055746780 & 14758457040\\
	10 &&&&&&&&& 14800940 & 590784084 & 12801068400\\
	11 &&&&&&&&&& 143190896 & 6422227344\\
	12 &&&&&&&&&&& 1415859276\\
	\bottomrule
	\end{tabular}
\caption{Coefficients of $T(u,v) = \sum t_{k,\ell} u^k v^\ell: t_{k,\ell}$ is the number of rooted 3-connected 4-regular maps with $\ell$ double edges and $2k$ simple edges, in which the root edge is simple.}
\label{tableT}
\end{table}

\begin{table}[htb]
\centering
\small
	\begin{tabular}{c|rr}
		\toprule
		$n$ & $t_{n,0}$ & $M_n$\\
		\midrule
		6 & 1 & 1\\
		7 & 0 & 0\\
		8 & 4 & 4\\
		9 & 6 & 6\\
		10 & 29 & 29\\
		11 & 88 & 88\\
		12 & 310 & 334\\
		13 & 1066 & 1196\\
		14 & 3700 & 4386\\
		15 & 13036 & 16066\\
		16 & 46092 & 59164\\
		17 & 164628 & 218824\\
		18 & 591259 & 812503\\
		19 & 2137690 & 3028600\\
		20 & 7770968 & 11329468\\
		21 & 28396346 & 42527120\\
		22 & 104256321 & 160148795\\
		23 & 384446150 & 604932614\\
		24 & 1423383358 & 2291617406\\
		\bottomrule
	\end{tabular}
\caption{$t_{n,0}$ is the number of simple rooted 3-connected 4-regular maps; $M_n$ is the number of simple 4-regular maps. As for graphs, the first discrepancy is at $n=12$. The numbers $t_{n,0}$ match those give in Table 1 from \cite{BDG93} given up to $n=15$.}\label{tab:simple_maps}
\end{table}

\newpage
\

\bibliographystyle{abbrv}
\newpage
\bibliography{biblio_4-regular}

\begin{thebibliography}{10}

\bibitem{BGW2002}
E.~A. Bender, Z.~Gao, and N.~C. Wormald.
\newblock The number of labeled 2-connected planar graphs.
\newblock {\em Electron. J. Combin.}, 9(1):Research Paper 43, 13, 2002.

\bibitem{bklm2007}
M.~Bodirsky, M.~Kang, M.~L{\"o}ffler, and C.~McDiarmid.
\newblock Random cubic planar graphs.
\newblock {\em Random Structures \& Algorithms}, 30(1-2):78--94, 2007.

\bibitem{BGGMTW05}
G.~Brinkmann, S.~Greenberg, C.~Greenhill, B.~D. McKay, R.~Thomas, and
  P.~Wollan.
\newblock Generation of simple quadrangulations of the sphere.
\newblock {\em Discrete Mathematics}, 305:33--54, 2005.

\bibitem{BDG93}
H.~J. Broersma, A.~J.~W. Duijvestijn, and F.~G{\"o}bel.
\newblock Generating all 3-connected 4-regular planar graphs from the
  octahedron graph.
\newblock {\em Journal of Graph Theory}, 17(5):613--620, 1993.

\bibitem{dissymetry}
G.~Chapuy, {\'E}.~Fusy, M.~Kang, and B.~Shoilekova.
\newblock A complete grammar for decomposing a family of graphs into
  3-connected components.
\newblock {\em Electronic Journal of Combinatorics}, 15(1):148, 2008.

\bibitem{subcritical}
M.~Drmota, {\'E}.~Fusy, M.~Kang, V.~Kraus, and J.~Ru{\'e}.
\newblock Asymptotic study of subcritical graph classes.
\newblock {\em SIAM Journal on Discrete Mathematics}, 25(4):1615--1651, 2011.

\bibitem{cubicMaps}
Z.~Gao and N.~C. Wormald.
\newblock Enumeration of rooted cubic planar maps.
\newblock {\em Annals of Combinatorics}, 6(3-4):313--325, 2002.

\bibitem{gn}
O.~Gim\'{e}nez and M.~Noy.
\newblock Asymptotic enumeration and limit laws of planar graphs.
\newblock {\em Journal of the American Mathematical Society}, 22:309--329,
  2009.

\bibitem{Lehel}
J.~Lehel.
\newblock Generating all 4-regular planar graphs from the graph of the
  octahedron.
\newblock {\em Journal of Graph Theory}, 5(4):423--426, 1981.

\bibitem{ms68}
R.~C. Mullin and P.~J. Schellenberg.
\newblock The enumeration of c-nets via quadrangulations.
\newblock {\em Journal of Combinatorial Theory}, 4:259--276, 1968.

\bibitem{ICM}
M.~Noy.
\newblock Random planar graphs and beyond.
\newblock In {\em Proceedings of the ICM, Seoul}, volume~IV, pages 407--430,
  2014.

\bibitem{handbook}
M.~Noy.
\newblock Graph enumeration.
\newblock In M.~B\'ona, editor, {\em Handbook of Enumerative Combinatorics},
  pages 402--442. CRC Press, 2015.

\bibitem{cubic-revisited}
M.~Noy, C.~Requil{\'e}, and J.~Ru{\'e}.
\newblock Further results on random cubic planar graphs.
\newblock {\em Submitted. Available on-line at \texttt{arXiv}:1802.06679}.

\bibitem{stanley2}
R.~P. Stanely.
\newblock {\em Enumerative Combinatorics}, volume~2.
\newblock Cambridge University Press, 2001.

\bibitem{tutte}
W.~T. Tutte.
\newblock A census of planar maps.
\newblock {\em Canadian Journal of Mathematics}, 15:249--271, 1963.

\end{thebibliography}

\end{document}